\documentclass[11pt,a4paper]{amsart}
\usepackage{todonotes} 
\usepackage[pagebackref,pdfauthor={Yiannis N. Petridis, Morten S. Risager},
pdftitle={Averages over Heegner points in the hyperbolic circle problem},
            pdfkeywords={Hyperbolic lattice points, Heegner points},
             pdfcreator={Pdflatex}]
{hyperref}
\hypersetup{colorlinks=true}
    
\usepackage{amsfonts,amssymb,amsmath,amsthm}

\usepackage{enumitem}

\newtheorem{theorem}{Theorem}[section]
\newtheorem{lem}[theorem]{Lemma}
\newtheorem{prop}[theorem]{Proposition}

\theoremstyle{definition}

\newtheorem{remark}[theorem]{Remark}
\newtheorem*{thankyou}{Acknowledgements}
\numberwithin{equation}{section}

\def \e {{\varepsilon}}

\def \g {{\gamma}}
\def \G {{\Gamma}}
\def \R {{\mathbb R}}
\def \H {{\mathbb H}}

\def \GmodH {{\Gamma\backslash\H}}

\def \psl  {{\hbox{PSL}_2( {\mathbb R})} }
\def \pslz  {{\hbox{PSL}_2( {\mathbb Z})} }

\newcommand{\norm}[1]{\left\lVert #1 \right\rVert}
\newcommand{\abs}[1]{\left\lvert #1 \right\rvert}
\newcommand{\inprod}[2]{\left \langle #1,#2 \right\rangle}
\newcommand{\vol}[1]{\hbox{vol}( #1 )}

\title[Heegner points, hyperbolic circle problem]{Averaging over Heegner points in the hyperbolic circle problem}    
\author{Yiannis N. Petridis}
\address{Department of Mathematics, University College London, Gower Street, London WC1E 6BT, United Kingdom}
\email{i.petridis@ucl.ac.uk}

\author{Morten S. Risager}
\address{Department of Mathematical
  Sciences, University of Copenhagen, Universitetsparken 5, 2100
  Copenhagen \O, Denmark}
\email{risager@math.ku.dk}

\thanks{The second author was supported by a Sapere Aude grant from
  The Danish Council for Independent Research (Grant-id:0602-02161B)}
\keywords{}
\subjclass[2010]{Primary 11F72; Secondary 11E45}
\date{\today}

\begin{document}
\begin{abstract}
For $\G=\pslz$ the hyperbolic circle problem aims
to estimate the number of elements
of the  orbit $\G z$ inside the
hyperbolic disc centered at $z$ with
radius  $\cosh^{-1}(X/2)$. 
We show that, by averaging over
Heegner points  $z$ of discriminant $D$, Selberg's error term estimate
can be improved, if $D$ is
large enough. The proof uses bounds on spectral exponential sums, and 
results  towards the sup-norm conjecture of eigenfunctions, and the Lindel\"of conjecture for twists of the $L$-functions attached to Maa{\ss} cusp forms. 
\end{abstract}
\maketitle

\section{Introduction}
For $\G=\pslz$ we consider the standard point-pair invariant on the upper half-plane $\H$ given
by \begin{equation*}u(z,w)=\frac{\abs{z-w}^2}{4\Im(z)\Im(w)}.\end{equation*}   The hyperbolic circle
problem aims to find good estimates, as $X\to\infty$,  on 
\begin{equation}\label{error}N(z,w,X)-\frac{\pi  X}{\vol{\GmodH}},\end{equation}
where \begin{equation*}N(z,w,X)=\#\{\gamma\in \Gamma; 4u(\g z,w)+2\leq X\}.\end{equation*} 
The best known estimate in this direction is due to Selberg who proved 
that 
\begin{equation}
  \label{Selberg-bound}
 N(z,w,X)-\frac{\pi
    X}{\vol{\GmodH}}=O(X^{2/3}).
\end{equation}
 The corresponding  estimate for the error term of the hyperbolic
 circle problem  modified to exclude small eigenvalues is also known
 for any cofinite group  and has not been improved for any group or any $z$, $w$.  The conjectural bound for the error term $O(X^{1/2+\e})$ would be
optimal except for $X^\e$ possibly being replaced by powers of
$\log X$, see \cite{PhillipsRudnick:1994a}.

We investigate averages of \eqref{error} with $z=w$ when  $z$ averages over the set of
Heegner points:
Consider the $\G$-orbits of binary quadratic forms \begin{equation*}ax^2+bxy+cy^2\end{equation*} of
negative fundamental discriminant $b^2-4ac=D<0$, and with $a>0$. Given such an orbit one associates the
corresponding Heegner
point in $\H$ given by
\begin{equation*}z=\frac{-b+i\sqrt{\abs{D}}}{2a}.\end{equation*} 
Here
the point of the orbit is chosen such that $z$ lies in the usual
fundamental domain of $\G$. Denote  the set of Heegner
points of  discriminant $D$ by $\Lambda_D$ and the class number by
$h(D)=\#\Lambda_D$. The order of growth of $h(D)$ is controlled by the estimates
\begin{equation}\label{siegel}
\abs{D}^{1/2-\e}\ll_\e h(D)\ll\abs{D}^{1/2}\log |D|,
\end{equation}
where the lower bound is a strong but ineffective result of Siegel,
see e.g. \cite[Ch. 21]{Davenport:1980}. 

Duke \cite{Duke:1988} showed  that Heegner points get equidistributed
on $\GmodH$, i.e. for $f$ a smooth compactly supported function
on $\GmodH$ we have, as $D\to\infty$,
\begin{equation}\label{equidistribution}
  \frac{1}{h(D)}\sum_{z\in
    \Lambda_D}f(z)\to\frac{1}{\vol{\GmodH}}fd\mu(z).\end{equation}  
In \cite[Thm 1.1
]{PetridisRisager:2016a} we have improved on Selberg's bound \eqref{Selberg-bound} when we
average the center locally in $\GmodH$. Therefore, we may also suspect an
improvement in \eqref{Selberg-bound} if we make a \emph{discrete} average over
Heegner points. Our main theorem confirms that this is indeed the case.
\begin{theorem}\label{maintheorem} Let $f$ be a smooth compactly
  supported non-negative function on $\GmodH$. Then
  \begin{equation*}
  \frac{1}{h(D)}\sum_{z\in \Lambda_D}f(z)\left(N(z,z,X)-\frac{\pi X}{\vol{\GmodH}}\right)=O(X^{7/12+\varepsilon}+X^{4/5+\varepsilon}D^{-4/165+\varepsilon}).
\end{equation*}
\end{theorem}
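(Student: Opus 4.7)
The plan is to combine the spectral (pre-trace) decomposition of the lattice-counting function with the Waldspurger--Zhang formula for the Heegner average of cusp forms. Using a standard Selberg automorphic kernel adapted to the characteristic function of the hyperbolic disc, I would write
\begin{equation*}
N(z,z,X)-\frac{\pi X}{\vol{\GmodH}}=\sum_j\hat{k}(t_j)\abs{u_j(z)}^2+\mathcal{E}(z,X)+O(\text{smoothing}),
\end{equation*}
where $\{u_j\}$ is an orthonormal basis of Maa{\ss} cusp forms with spectral parameters $t_j$, $\mathcal{E}(z,X)$ denotes the Eisenstein contribution, and the transform $\hat{k}(t)$ oscillates essentially as $X^{1/2+it}$ with polynomial decay in $\abs{t}$. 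Multiplying by $f$, averaging over $\Lambda_D$, and setting $M_j(D)=\frac{1}{h(D)}\sum_{z\in\Lambda_D}f(z)\abs{u_j(z)}^2$, the task reduces to estimating $\sum_j\hat{k}(t_j)M_j(D)$ and the corresponding Eisenstein sum.

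To extract a saving from the Heegner averaging I would spectrally expand $f\abs{u_j}^2$ in the automorphic basis, obtaining
\begin{equation*}
M_j(D)=\sum_k\inprod{f\abs{u_j}^2}{u_k}\frac{W_k(D)}{h(D)}+\text{cts.},\qquad W_k(D)=\sum_{z\in\Lambda_D}u_k(z).
\end{equation*}
The constant-form term is the naive equidistribution main term, controlled by Bernstein--Reznikov-type triple-product bounds. For each cuspidal $u_k$, Waldspurger's formula (in Zhang's explicit form) identifies $\abs{W_k(D)}^2$ with a multiple of $L(1/2,u_k)\,L(1/2,u_k\otimes\chi_D)/\sqrt{\abs{D}}$. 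Inserting hybrid subconvex bounds for this product---power saving in $\abs{D}$, polynomially controlled in $t_k$---then yields a factor $\abs{D}^{-\delta}$; the explicit exponent $4/165$ in the theorem should emerge from the strongest currently available such bound.

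I would then split the spectral sum at a parameter $T$ to be optimised. For $\abs{t_j}\leq T$ the Heegner-averaged estimate contributes $\ll X^{1/2+\e}T^{A}\abs{D}^{-\delta}$ once a non-trivial bound for the spectral exponential sum $\sum_{t_j\leq T}X^{it_j}$ is applied, producing the second summand of the theorem. For $\abs{t_j}>T$ the decay of $\hat{k}(t)$, combined with the sup-norm bound $\norm{u_j}_\infty\ll t_j^{5/12+\e}$ of Iwaniec--Sarnak and Weyl's law, controls the tail and yields the first summand $X^{7/12+\e}$. The Eisenstein contribution is handled analogously, using the factorisation of $\sum_{z\in\Lambda_D}E(z,s)$ through Hecke $L$-functions of $\Q(\sqrt{D})$, for which convexity bounds are sufficient.

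The principal obstacle is the hybrid subconvexity input: one needs a bound for $L(1/2,u_k\otimes\chi_D)$ that is simultaneously power-saving in $\abs{D}$ and polynomially controlled in $t_k$, and it is precisely the balance of this bound against the spectral exponential sum estimate that dictates the exponents $7/12$ and $4/165$. A secondary but essential difficulty is the uniform control of the triple products $\inprod{f\abs{u_j}^2}{u_k}$ over all three spectral parameters; Bernstein--Reznikov-type bounds should suffice, provided their $j,k$-dependence is tracked carefully throughout the optimisation.
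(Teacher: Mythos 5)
Your overall architecture---pre-trace formula, spectral expansion of $f\abs{u_j}^2$, Waldspurger--Zhang for the Weyl sums, and a split of the spectral sum---matches the paper's, but several of the specific mechanisms you invoke would not deliver the stated exponents, and two fail outright. First, the paper explicitly rejects Bernstein--Reznikov-type triple product bounds for $\inprod{f\abs{u_j}^2}{u_k}$ precisely because they are not uniform: for fixed $t_j$ they only apply once $t_k$ is large relative to $t_j$. The paper instead proves a weaker but fully uniform bound (Theorem \ref{coefficient-bounds}) by integrating by parts against $(-\Delta)^n$ and controlling the resulting derivatives via Fay's raising and lowering operators; without some such uniform substitute your expansion of $M_j(D)$ cannot be summed over the range $t_k\ll t_j$. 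Second, for the Eisenstein Weyl sums convexity is not sufficient: by \eqref{average-Eisenstein}, $W_E(D,t)$ carries a factor $\abs{D}^{1/4}L(1/2+it,\chi_D)$, so the convexity bound $L(1/2+it,\chi_D)\ll_t\abs{D}^{1/4+\e}$ gives only $\abs{W_E(D,t)}/h(D)\ll D^{\e}$ and hence no power saving in $D$ at all. Genuine $D$-aspect subconvexity is required, which the paper extracts on average from Young's cubic and sixth moment bounds (Theorem \ref{young}) together with Ivi\'c's and Weyl's estimates.

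Moreover, the two error terms do not arise as you describe. The term $X^{7/12+\e}$ does not come from a sup-norm tail estimate; it comes from the \emph{main} (equidistributed) term $\sum_j h^{\pm}(t_j)\int f\abs{u_j}^2d\mu$ via the Luo--Sarnak spectral exponential sum bound \eqref{oscillation}, exactly as in the local-average predecessor paper. The term $X^{4/5+\e}D^{-4/165+\e}$ comes from the discrepancy terms $Q_c$, $Q_E$, and its exponents are produced by three ingredients absent from your outline: (i) a new \emph{average} sup-norm bound showing $\norm{u_j}_\infty\ll \abs{t_j}^{3/8+\e}$ on average (Theorem \ref{sup-norm-average-intro}), which beats the Iwaniec--Sarnak exponent $5/12$ and sets the admissible interpolation parameter $a<8/11$; (ii) an interpolation of the equidistribution rate $D^{-1/12}$ of Theorem \ref{propo} against the trivial local Weyl law bound $O(T^2)$ of Proposition \ref{sup-of-averages}, needed because the equidistribution error grows too fast in $t$ to be summed directly against $h^{\pm}$; and (iii) an optimization over the smoothing width $\delta$ of the kernel, balancing $\delta X$ against $X^{1/2}\delta^{-3/2}D^{-a/12}$, which is where $4/5=1-1/5$ and $4/165=\tfrac25\cdot\tfrac{a}{12}$ with $a=8/11$ actually come from. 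The exponent $4/165$ is therefore not \lq\lq the strongest currently available hybrid subconvexity exponent\rq\rq; it is the output of this optimization. As written, your outline omits the smoothing-parameter balance entirely and would at best recover a bound with an unspecified, and in the $D$-aspect weaker, saving.
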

\begin{remark}
We notice that for $D\geq X^{11/2+\varepsilon}$ this is better than
what we would get using Selberg's bound \eqref{Selberg-bound}.
\end{remark}

Let $\lambda_j=1/4+t_j^2$, $t_j\geq 0$, $j\ge 1$ be
the cuspidal eigenvalues of the automorphic Laplacian on $L^{2}(\GmodH)$ listed according to
  multiplicity, and $u_j$ the corresponding Hecke-Maa{\ss} cusp forms
  normalized to have $L^2$-norm equal to one.
We remark that the cuspidal eigenvalues satisfy   Weyl's law
\begin{equation}\label{weyllaw}
  \sum_{\abs{t_j}\leq T}1=\frac{\vol{\GmodH}}{4\pi}T^2+O(T\log T).
\end{equation}
We also set $\lambda_0=0$, $t_0=i/2$ to correspond to the constant
eigenfunction $(\vol{\GmodH})^{-1/2}.$ 

 Let $L(u_j\times \chi_D,1/2)$ be
  the central value of the $L$-function of $u_j$ twisted by the
  odd primitive quadratic character $\chi_D$ with conductor $\abs{D}$. 

Let $u$ be a Maa{\ss} cusp form with eigenvalue $1/4+t^2$ or an Eisenstein series $E(z, 1/2+it)$. In our proof of Theorem \ref{maintheorem} we are using
approximations to the following three conjectures:
\begin{enumerate}[label=(\text{C\arabic*})]
\item \label{Lindelof-conjecture} The Lindel\"of conjecture for $L(u\times \chi_D,1/2)$:
  \begin{equation*}
    L(u\times \chi_D,1/2)=O(((1+\abs{t})D)^\varepsilon).
  \end{equation*}
\item \label{sup-norm-conjecture} The sup-norm conjecture, i.e.
  \begin{equation*}
   \sup_{z\in K}|u(z)|=O_K((1+\abs{t})^\varepsilon)
  \end{equation*}
for any compact set $K$.
\item \label{spectral-average-conjecture} Bounds on spectral
  exponential sums
  \begin{equation*}
 \sum_{\abs{t_j}\leq
   T}X^{it_j}=O(X^\varepsilon(1+T)^{1+\varepsilon})\label{spectral-exp-sum-conjecture},
\end{equation*}
see \cite{PetridisRisager:2016a}.
\end{enumerate}
 
  Assuming these three conjectures we may improve Theorem \ref{maintheorem}
and prove the following conditional result: 
\begin{theorem}\label{maintheorem-conditional} Let $f$ be a smooth compactly
  supported non-negative function on $\GmodH$. Then assuming
  \ref{Lindelof-conjecture}, \ref{sup-norm-conjecture}, and
  \ref{spectral-average-conjecture} we have
  \begin{equation*}
  \frac{1}{h(D)}\sum_{z\in \Lambda_D}f(z)\left(N(z,z,X)-\frac{\pi X}{\vol{\GmodH}}\right)=O(X^{1/2+\varepsilon}+X^{4/5+\varepsilon}D^{-1/10+\varepsilon}).
\end{equation*}
\end{theorem}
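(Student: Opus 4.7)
The plan is to implement the spectral approach to the hyperbolic circle problem of Selberg, refined as in \cite{PetridisRisager:2016a}, combined with the Hecke--Waldspurger period formulas for Heegner points. I would begin by sandwiching $N(z,z,X)$ between non-negative smoothings $N^{\pm}(z,z,X)$ depending on a spectral scale $Y$, so that the Heegner average of $N^+-N^-$ is controlled (via the local-average bounds of \cite{PetridisRisager:2016a}) by a smoothing error of shape $O(XY^{-1/2+\varepsilon})$. The smoothed counts admit the standard spectral expansion
\begin{equation*}
N^{\pm}(z,z,X) - \frac{\pi X}{\vol{\GmodH}}
= \sum_{j\geq 1}\hat k^{\pm}(t_j)\,|u_j(z)|^2 + \frac{1}{4\pi}\int_{\R}\hat k^{\pm}(t)\,|E(z,1/2+it)|^2\,dt,
\end{equation*}
whose Selberg transform $\hat k^{\pm}(t)$ is essentially supported on $|t|\leq Y$ with main contribution of size $X^{1/2}/(1+|t|)$ and oscillation $X^{it}$.

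The cuspidal contribution, after averaging over $\Lambda_D$ with weight $f$, reduces to $\sum_j \hat k^{\pm}(t_j)\,\alpha_j$, where $\alpha_j := h(D)^{-1}\sum_{z\in\Lambda_D} f(z)|u_j(z)|^2$. Applying the sup-norm conjecture \ref{sup-norm-conjecture} pointwise on $\mathrm{supp}(f)$ will give $\alpha_j\ll_f (1+|t_j|)^\varepsilon$. Abel summation combined with the spectral exponential sum bound \ref{spectral-average-conjecture} then yields the cuspidal estimate $\sum_j \hat k^{\pm}(t_j)\,\alpha_j \ll X^{1/2+\varepsilon}Y^\varepsilon$. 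Balancing this against the smoothing error at the optimal choice of $Y$ produces the first term $X^{1/2+\varepsilon}$ of the claimed estimate.

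For the Eisenstein contribution I would exploit Hecke's classical factorisation
\begin{equation*}
\sum_{z\in\Lambda_D} E(z,s) = c\,|D|^{s/2}\,\frac{\zeta(s)L(s,\chi_D)}{\zeta(2s)},
\end{equation*}
which at $s=1/2+it$, together with the Lindel\"of bound on $L(1/2+it,\chi_D)$ supplied by \ref{Lindelof-conjecture} in the Eisenstein case, the classical Lindel\"of bound for $\zeta$ on the critical line, and Siegel's lower bound \eqref{siegel}, yields
\begin{equation*}
\frac{1}{h(D)}\sum_{z\in\Lambda_D} E(z,1/2+it)\ll D^{-1/4+\varepsilon}(1+|t|)^\varepsilon.
\end{equation*}
To pass from this to the Heegner average of $f\cdot|E(\cdot,1/2+it)|^2$ I would spectrally decompose $f\cdot\overline{E(\cdot,1/2+it)}$ and handle the resulting paired Heegner periods (cuspidal via Waldspurger--Zhang with \ref{Lindelof-conjecture}, Eisenstein via the above identity). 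Integrating the resulting pointwise bound against $\hat k^{\pm}(t)$ by stationary-phase delivers an Eisenstein contribution of the form $X^{1/2+\varepsilon}Y^\sigma D^{-1/4+\varepsilon}$ for some explicit $\sigma>0$, and balancing against the smoothing error at the appropriate intermediate $Y$ produces the second term $X^{4/5+\varepsilon}D^{-1/10+\varepsilon}$.

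The hardest step will be the Eisenstein $t$-integral: the Heegner-period bound only offers $(1+|t|)^\varepsilon$-type growth in $|t|$, but the spectral kernel $\hat k^{\pm}$ oscillates as $X^{it}$, and extracting the full $D^{-1/10}$-saving demands a continuous analogue of \ref{spectral-average-conjecture} for the Eisenstein measure together with a uniform Lindel\"of input for $L(1/2+it,\chi_D)$ across $|t|\leq Y$. A secondary difficulty, should one attempt to upgrade the cuspidal side via Waldspurger--Zhang for a refined bound, is the triple-product contribution via Watson's formula and the control of the adjoint $L$-values entering there; this is avoided here by the sup-norm conjecture, which already delivers the exponent dictating the first term.
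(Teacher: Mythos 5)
Your overall skeleton (sandwiching by admissible kernels, pre-trace formula, Waldspurger--Zhang and the Hecke factorisation for the Weyl sums, Lindel\"of for the $D$-saving) is the right one, but your treatment of the cuspidal spectrum has a genuine gap that would collapse the bound back to Selberg's $X^{2/3}$. You propose to estimate $\sum_j \hat k^{\pm}(t_j)\alpha_j$ with $\alpha_j=h(D)^{-1}\sum_{z\in\Lambda_D}f(z)\abs{u_j(z)}^2$ by combining the pointwise bound $\alpha_j\ll t_j^{\varepsilon}$ from \ref{sup-norm-conjecture} with the spectral exponential sum conjecture \ref{spectral-average-conjecture} ``by Abel summation''. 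This cannot work as stated: \ref{spectral-average-conjecture} concerns the unweighted sum $\sum_{t_j\le T}X^{it_j}$, and partial summation only transfers its cancellation to weights of bounded variation. The sequence $\alpha_j$ fluctuates erratically in $j$, its total variation over a window $T\le t_j\le 2T$ having no reason to be smaller than $\sum_j\alpha_j\asymp T^{2}$, so Abel summation yields nothing beyond the absolute-value bound $\sum_j\abs{\hat k^{\pm}(t_j)}\,t_j^{\varepsilon}\ll X^{1/2}\delta^{-1/2+\varepsilon}$, i.e.\ $X^{2/3+\varepsilon}$ after balancing against the smoothing error. The missing step is the decomposition $\alpha_j=\frac{1}{\vol{\GmodH}}\int f\abs{u_j}^2d\mu+(\text{deviation})$ as in \eqref{too-long-to-type}: the mean part has weights that can be smoothed out using the quantum-ergodicity input of Luo--Sarnak, and this is where \ref{spectral-average-conjecture} legitimately enters (as in \cite[Remark 6.4]{PetridisRisager:2016a}), giving $X^{1/2+\varepsilon}$; the deviation $Q_c$ is then bounded in absolute value by the quantitative Heegner equidistribution of Theorem \ref{propo} (Waldspurger--Zhang plus \ref{Lindelof-conjecture}, which upgrades the Weyl-sum saving to $D^{-1/4+\varepsilon}$, cf.\ Remark \ref{bound-on-GRH}) together with \ref{sup-norm-conjecture}.

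A consequence of this gap is that you misattribute the second error term. Under the three conjectures it is the cuspidal deviation $Q_c\ll X^{1/2}\delta^{-(3/2+\varepsilon)}D^{-1/4+\varepsilon}$, balanced against the smoothing error $\delta X$ at $\delta=X^{-1/5}D^{-1/10}$, that produces $X^{4/5+\varepsilon}D^{-1/10+\varepsilon}$; the analogous Eisenstein deviation $Q_E$ is smaller by a factor of $T$ per dyadic window because the continuous spectrum has density $dt$ rather than the $t\,dt$ of Weyl's law. Your Eisenstein analysis is otherwise on the right track --- the Hecke factorisation and Lindel\"of do give $W_E(D,t)/h(D)\ll D^{-1/4+\varepsilon}(1+\abs{t})^{\varepsilon}$ --- but note that passing from Weyl sums to the Heegner average of $f\abs{E(\cdot,1/2+it)}^2$ requires expanding the full product $f\abs{E}^2$ spectrally and controlling the triple products $\inprod{f\abs{\phi_1}^2}{\phi_2}$ uniformly in \emph{both} spectral parameters; this uniformity is not automatic (the known exponential-decay bounds are not uniform in this sense) and is supplied by Theorem \ref{coefficient-bounds} via the raising and lowering operators. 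Finally, the smoothing error for kernels supported on an annulus of width $\delta$ is $O(\delta X)$, not $O(XY^{-1/2+\varepsilon})$.
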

\begin{remark}
  Theorem \ref{maintheorem-conditional} implies that if $D\geq
  X^{4/3+\varepsilon}$ then this is better than
what we would get using Selberg's bound
   \eqref{Selberg-bound}, and if $D\geq
  X^{3}$ we have the bound $O(X^{1/2+\varepsilon})$.
\end{remark}

In order to prove the unconditional bound in Theorem \ref{maintheorem}
we use approximations to  \ref{Lindelof-conjecture}, \ref{sup-norm-conjecture}, and
  \ref{spectral-average-conjecture}. For an approximation to
  \ref{Lindelof-conjecture} we use a recent theorem by Young \cite{Young:2014b} on the cubic moment of the $L$-function over short sums, see Theorem \ref{young} below. To adress \ref{spectral-average-conjecture} we
  use the following estimate due to Sarnak and Luo \cite[Eq. (58)]{LuoSarnak:1995}: 
  \begin{equation}\label{oscillation}
    \sum_{\abs{t_j}\leq T}X^{it_j}=O(X^{1/8}T^{5/4+\varepsilon}).
  \end{equation}
As an approximation to \ref{sup-norm-conjecture} we prove the
following average bound, which may be of independent interest:
\begin{theorem}\label{sup-norm-average-intro} We have 
  \begin{equation*}
 T^{-2}\sum_{1\leq t_j\leq T}\norm{u_j}_{\infty}^2=O(T^{2(1/2-1/8)+\varepsilon}).   
  \end{equation*}
 \end{theorem}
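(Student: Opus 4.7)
The plan is to adapt the Hecke amplification method of Iwaniec--Sarnak, exploiting the averaging over $j$ to improve on what their pointwise bound $\norm{u_j}_\infty \ll t_j^{5/12+\e}$ yields (which would only give $T^{5/6+\e}$ on average).

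By Weyl's law \eqref{weyllaw} and a dyadic decomposition in $t_j$, it suffices to prove
\begin{equation*}
\sum_{T_0\leq t_j\leq 2T_0}\norm{u_j}_\infty^2\ll T_0^{11/4+\e}
\end{equation*}
for each dyadic $T_0\leq T$. For each $j$, choose $z_j^*\in F$ with $|u_j(z_j^*)|=\norm{u_j}_\infty$. Since Laplace eigenfunctions with eigenvalue $1/4+t_j^2$ oscillate on scale $t_j^{-1}$, each $z_j^*$ may be replaced by the nearest point of a fixed $T_0^{-1}$-net $\mathcal{N}\subset F$ of cardinality $\ll T_0^2$, losing at most a multiplicative constant. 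It therefore suffices to bound $\sum_j |u_j(z_j)|^2$ with $z_j\in \mathcal{N}$.

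For each $z\in\mathcal{N}$ I would apply the amplified pre-trace formula: with a Hecke amplifier $(a_n)_{n\leq N}$ tailored to $u_j$,
\begin{equation*}
\abs{\sum_{n\leq N}a_n\lambda_j(n)}^2 |u_j(z)|^2 \ll \sum_{m,n\leq N} a_m\bar a_n \sum_{\g \in \G^{(mn)}}k(z,\g z),
\end{equation*}
where $\G^{(mn)}$ is the $(m,n)$-Hecke correspondence and $k$ is the Selberg transform of a spectral window supported in $[T_0,2T_0]$. For most $j$ one can arrange $|\sum_n a_n \lambda_j(n)|\gg N^{1-\e}$ by a Rankin--Selberg moment bound on Hecke eigenvalues. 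Inverting yields a bound for $|u_j(z_j)|^2$ in terms of the geometric side, which we then sum over $j$ and $z_j\in \mathcal{N}$.

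The main obstacle is to extract the improved exponent $3/4$ rather than the $5/6$ that a term-by-term triangle inequality reproduces. Bridging this gap of $T_0^{1/12}$ requires a genuinely \emph{bilinear} bound on
\begin{equation*}
\sum_{z\in \mathcal{N}}\sum_{\g \in \G^{(mn)}} k(z,\g z),
\end{equation*}
uniform in $(m,n)$, rather than a pointwise control at each $z$. Such an estimate must exploit both the sum over the $\ll T_0^2$ net points and the Hecke structure, presumably via an averaged application of the quadratic-form counting estimates underlying the Iwaniec--Sarnak method combined with a large-sieve-type input. Proving this refined geometric estimate is the technical heart of the argument, and balancing the amplifier length $N$ against it should yield the claimed exponent.
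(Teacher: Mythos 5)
There is a genuine gap: the step you yourself identify as ``the technical heart'' --- a bilinear bound on $\sum_{z\in\mathcal N}\sum_{\g\in\G^{(mn)}}k(z,\g z)$ over a $T_0^{-1}$-net --- is left entirely unproven, and it is not where the saving comes from in the paper. The paper does not exploit the average over $z$ at all; the geometric side is handled pointwise, uniformly in $z$, by the Blomer--Holowinsky refinement of the Iwaniec--Sarnak counting inequality (which replaces $yL^{1/2}$ by $y$), so no net of points and no bilinear estimate is needed. The averaging over $j$ enters on the \emph{spectral} side, through the amplifier: one takes $\alpha_n=h_N(n)\lambda_j(n)\abs{v_j(1)}^2$, so that the amplified quantity is the smoothed Rankin--Selberg sum $\sum_n h_N(n)\abs{v_j(n)}^2=\tfrac{12}{\pi^2}N+r(t_j,N)$. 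For the very short length $N=T^{1/4}$ one cannot show individually that this is $\gg N$ (this is exactly the obstruction to proving $\norm{u_j}_\infty\ll t_j^{3/8+\e}$ pointwise, cf.\ Remark 1.6 of Iwaniec--Sarnak), but Luo and Sarnak's mean Lindel\"of bound gives $\sum_{t_j\leq T}\abs{r(t_j,N)}=O(T^{2+\e}N^{1/2})$, and the resulting error term $\abs{u_j(z)}^2N\abs{r(t_j,N)}$ is absorbed after summation over $j$ using the individual bound $\norm{u_j}_\infty\ll t_j^{5/12+\e}$. Your vaguer claim that ``for most $j$ one can arrange $\abs{\sum_n a_n\lambda_j(n)}\gg N^{1-\e}$ by a Rankin--Selberg moment bound'' gestures at this but does not quantify the exceptional set or explain how the exceptional $j$ are controlled, which is precisely what the $r(t_j,N)$ bookkeeping accomplishes.

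Two further points. First, the reduction to a net needs a Bernstein-type gradient bound to justify ``losing at most a multiplicative constant,'' and in any case is unnecessary. Second, you do not address the non-compactness of the fundamental domain: the counting inequality degenerates for large $y$ (the term $(L+y)T^{1/2}$ grows), and the paper must split off the region $y>N$ and treat it by the individual bound $\abs{u_j(z)}\ll t_j^{\e}\bigl((t_j/y)^{1/2}+t_j^{1/6}\bigr)$, which already gives $O(T^{2(1/2-1/8)+\e})$ there. As written, your argument does not yield the theorem; to complete it along the paper's lines you should replace the hoped-for bilinear geometric estimate by the Luo--Sarnak average control of the Rankin--Selberg amplifier.
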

\begin{remark}
This shows that on average
$\norm{u_j}_{\infty}$ is of size $O(\abs{t}^{3/8+\varepsilon})$.  This is an improvement on average of  the  individual bound $\norm{u_j}_\infty=O(\abs{t}^{5/12+\varepsilon})$ due to Iwaniec and Sarnak  \cite[Eq. (A.15)]{iwaniecsarnak:1995a}. 
The individual bound $\norm{u_j}_{\infty}=O(\abs{t}^{3/8+\varepsilon})$
 would follow if we could prove expected lower bounds on the mean square for the Hecke eigenvalues, see \cite[Remark 1.6]{iwaniecsarnak:1995a}.
  Moreover, 
  Young \cite{Young:2015a} proved the following bound   for the sup-norm of Eisenstein series on a compact set $K$:  \begin{equation} \label{Youngs-bound}
    \norm{E(z,1/2+it)}_{\infty,K}=O_{K,\varepsilon}((1+\abs{t})^{1/2-1/8+\varepsilon}).
  \end{equation}

\end{remark} 

\begin{remark}
The situation is much easier when we fix $w$ and average over $z\in \Lambda_D$.  We get the following estimate:
\begin{equation}\label{optimalerror}\frac{1}{h(D)}\sum_{z\in \Lambda_D}\left(N(z,w,X)-\frac{\pi
    X}{\vol{\GmodH}}\right)= O(X^{1/2+\e})\end{equation} for
$D\geq X^{3}$. On GRH we have \eqref{optimalerror} for
$D\geq X^1.$
We omit the proof. It is simpler than the proof of Theorem \ref{maintheorem}, because we can separate the average over $z\in\Lambda_D$ for $u_j(z)$ and $E(z, 1/2+it)$ from $u_j(w)$ and $E(w, 1/2+it)$ in the pre-trace formula.
\end{remark}

\begin{thankyou} We thank Gergely Harcos and P\'eter Maga for useful comments on
  sup-norm bounds.  
\end{thankyou}

\section{Equidistribution of Heegner points}\label{Heegner}

Duke's proof of the equidistribution of Heegner points \eqref{equidistribution} involves non-trivial bounds on Fourier coefficients of half-integral
weight Maa{\ss} forms and uses the Kuznetsov formula. The technique is due to Iwaniec
\cite{Iwaniec:1987a}, who proved non-trivial bounds for the Fourier coefficients of holomorphic forms of half-integral weight by using the Petersson formula to relate them to Kloosterman sums $K(m, n, c)$ and exhibited  cancellations for sums of these as  $c$ varies.  Duke  proved the following bounds
on the average of eigenfunctions, the so-called \lq Weyl sums\rq, see \cite[p. 89]{Duke:1988}:
\begin{align*}
 \frac{1}{h(D)}W_c(D,t_j)&=\frac{1}{h(D)}\sum_{z\in\Lambda_D}u_j(z)\ll_{\varepsilon}\abs{t_j}^AD^{-1/28+\varepsilon},\\
\frac{1}{h(D)}W_E(D,t)&=\frac{1}{h(D)}\sum_{z\in\Lambda_D}E(z,1/2+it)\ll_{\varepsilon}\abs{t}^AD^{-1/28+\varepsilon}.
\end{align*}
One can then use standard approximation techniques to prove
\eqref{equidistribution}.

To improve on these bounds we use  that Weyl sums are connected  to $L$-functions. We assume that the Maa{\ss} cusp forms $u_j$ are also
eigenfunctions of all Hecke operators. 
 We quote
\cite[Eq. (2.2)]{Young:2014b} for the following Waldspurger--Zhang type formula: 
\begin{equation}\label{Waldspurger-Zhang}
\lvert W_c(D,t_j)\rvert^2=\frac{\sqrt{\abs{D}}L(u_j\times\chi_D,1/2)L(u_j,1/2)}{2L(\hbox{sym}^2u_j,1)}.
\end{equation} 
Similarly we have\begin{equation} \label{average-Eisenstein}
  W_E(D,t)=\left(\frac{{\sqrt{\abs{D}}}}{2}\right)^{1/2+it}\frac{L(1/2+it,
    \chi_{D})\zeta(1/2+it)}{\zeta(1+2it)},
\end{equation}
  see \cite[Eq. 22.45]{IwaniecKowalski:2004a}.

Young recently proved the following bound:

\begin{theorem} {\cite[Thm. 1.1]{Young:2014b}}\label{young}  For the third moment of twists of  the $L$-functions of  Maa{\ss} forms we have
  \begin{equation*}
    \sum_{T\leq t_j\leq
      T+1}L(u_j\times\chi_D,1/2)^3+\int_{T}^{T+1}\abs{L(1/2+it,\chi_D)}^6\ll (\abs{D}(1+T))^{1+\e}.
  \end{equation*}
\end{theorem}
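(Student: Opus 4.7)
My plan is to follow the Conrey--Iwaniec strategy for cubic moments of quadratic twists, adapted to the Maa\ss{} spectrum. First, I would use an approximate functional equation to write
\begin{equation*}
L(u_j\times\chi_D,1/2)^3 \approx \sum_{n_1,n_2,n_3} \frac{\lambda_j(n_1)\lambda_j(n_2)\lambda_j(n_3)\chi_D(n_1n_2n_3)}{\sqrt{n_1n_2n_3}}\,V_{t_j}\!\left(\frac{n_1n_2n_3}{(\abs{D}(1+\abs{t_j}))^{3/2}}\right),
\end{equation*}
with a similar expansion for $\abs{L(1/2+it,\chi_D)}^6$ in terms of divisor functions twisted by $\chi_D$. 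The point of unifying the two terms on the left side is that the Eisenstein contribution naturally arises when Kuznetsov's formula is applied to the spectral sum over $[T,T+1]$, since the spectral measure in the Kuznetsov formula combines cusp forms and Eisenstein series, and the Eisenstein Hecke eigenvalues equal $\sum_{ab=n}(a/b)^{it}$, producing exactly $\abs{L(1/2+it,\chi_D)}^6$ after cubing.

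Next, I would apply the Kuznetsov trace formula to the $\lambda_j(n_1n_2n_3)$ weighted sums (after using Hecke multiplicativity to combine the three eigenvalues into a single eigenvalue of a divisor-weighted variable), with a test function concentrated on $[T,T+1]$. This yields a diagonal term, whose size I would bound by $(\abs{D}(1+T))^{1+\varepsilon}$ using standard divisor estimates and the key cancellation $\sum_{n\leq Y}\chi_D(n)\ll \sqrt{\abs{D}}\log\abs{D}$ from Pólya--Vinogradov, plus an off-diagonal term involving a sum of Kloosterman sums
\begin{equation*}
\sum_{c}\frac{S(m,n;c)}{c}\,\Phi\!\left(\frac{\sqrt{mn}}{c};T\right),
\end{equation*}
where $m,n$ run through variables weighted by $\chi_D$.

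The main work is the Kloosterman term. I would open the Kloosterman sums, apply Poisson summation to the sum over the $n_i$ modulo $c\abs{D}$ (so that the quadratic character $\chi_D$ and the additive character from $S(m,n;c)$ interact), and then evaluate the resulting complete character sum modulo $c\abs{D}$ using Weil-type bounds. A reciprocity trick (exchanging the roles of $c$ and $\abs{D}$) is typically required to separate the conductor $\abs{D}$ from the Kloosterman modulus so that the Weil bound is applied to the correct variable. The remaining integral transform involves the Bessel $K$- and $J$-functions appearing in Kuznetsov, localized to the window $t_j\in[T,T+1]$; stationary-phase analysis bounds it.

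The hardest step will be this character-sum estimation in the off-diagonal range: one must extract square-root cancellation in a composite modulus $c\abs{D}$ while keeping $T$ and $\abs{D}$ on equal footing, so that the final bound is symmetric and linear in $\abs{D}(1+T)$. Controlling the weight function $\Phi(\cdot;T)$ uniformly in the short spectral window $[T,T+1]$---where oscillations in the spectral parameter are delicate---together with ensuring the Poisson/reciprocity step does not blow up boundary contributions at the edges of the dyadic ranges of $n_1,n_2,n_3$, is where the subtlety lies and is the step I expect to demand the most care.
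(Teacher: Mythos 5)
There is a mismatch of expectations here: the paper does not prove this statement at all. It is quoted verbatim from Young \cite[Thm.~1.1]{Young:2014b}, and the only contribution of the authors to its formulation is the remark that positivity of the central values $L(u_j\times\chi_D,1/2)\ge 0$ (Jacquet--Chen \cite[Thm~1]{JacquetChen:2001a}) allows them to state the cuspidal sum without absolute values and restricted to the full modular group. So there is no ``paper's own proof'' to compare against; what you have written is an outline of how one might reprove Young's theorem.

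As an outline, you have correctly identified the right circle of ideas -- the Conrey--Iwaniec cubic-moment method, the packaging of the Eisenstein term via the continuous-spectrum contribution to Kuznetsov (whose divisor-type Hecke eigenvalues do indeed produce $\abs{L(1/2+it,\chi_D)}^6$), and the approximate functional equation / Poisson / reciprocity skeleton. But as a proof it has a genuine gap precisely where the theorem lives: the off-diagonal analysis. Your plan is to ``evaluate the resulting complete character sum modulo $c\abs{D}$ using Weil-type bounds,'' but square-root cancellation in the complete sums alone does not deliver the exponent $1+\e$; the Conrey--Iwaniec/Young argument depends on the fact that these sums are Sali\'e-type sums that can be \emph{evaluated explicitly} (with conductor dropping), after which a further layer of cancellation must be extracted from the sum over the modulus $c$ and from the oscillation of the Bessel transforms in the window $t_j\in[T,T+1]$. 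None of that is carried out, and it is the entire content of Young's paper. There is also a bookkeeping slip in your approximate functional equation: the analytic conductor of $L(u_j\times\chi_D,1/2)$ is $\asymp(\abs{D}(1+\abs{t_j}))^2$, so the cube is a sum of length $(\abs{D}(1+\abs{t_j}))^{3}$ in $n_1n_2n_3$, not $(\abs{D}(1+\abs{t_j}))^{3/2}$; getting this wrong changes where the diagonal/off-diagonal transition occurs. In short: right strategy, but the proposal defers exactly the steps that constitute the proof, and the one concrete mechanism you name for the hard step (Weil bounds) is insufficient on its own.
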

In our formulation we have used
the positivity of central values, see \cite[Thm 1]{JacquetChen:2001a} to restrict to  Maa\ss{} forms for
the full modular group.

\begin{lem}\label{short-average-bound-on-weyl}The following estimates on short averages of the Weyl sums hold:
\begin{equation*}
  \frac{1}{h(D)^2}\sum_{T\leq t_j\leq
    T+1}\lvert W_c(D,t_j)\rvert^2\ll D^{-\frac{1}{6}+\e}(1+T)^{1+\e},
\end{equation*}
\begin{equation*}
\frac{1}{h(D)^2}\int_T^{T+1}\lvert W_E(D,t)\rvert^2dt\ll D^{-\frac 1 6 +\e}(1+T)^{1+\e}.
\end{equation*}

\end{lem}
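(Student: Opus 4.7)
The plan is to substitute the Waldspurger--Zhang identity \eqref{Waldspurger-Zhang} and its Eisenstein analogue \eqref{average-Eisenstein}, split the resulting sums and integrals via Cauchy--Schwarz and H\"older, and then combine Young's Theorem \ref{young} with classical spectral moment estimates and Siegel's lower bound \eqref{siegel}.

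For the cuspidal bound, I would substitute \eqref{Waldspurger-Zhang} and remove the factor $L(\hbox{sym}^2 u_j,1)^{-1}$ using the Hoffstein--Lockhart lower bound $L(\hbox{sym}^2 u_j,1)\gg t_j^{-\e}$. The statement then reduces to
\begin{equation*}
 \sum_{T\le t_j\le T+1}L(u_j\times\chi_D,1/2)L(u_j,1/2) \ll \abs{D}^{1/3+\e}(1+T)^{1+\e}.
\end{equation*}
Cauchy--Schwarz gives the upper bound $\bigl(\sum L(u_j\times\chi_D,1/2)^2\bigr)^{1/2}\bigl(\sum L(u_j,1/2)^2\bigr)^{1/2}$. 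For the first factor, H\"older with exponents $3/2$ and $3$ interpolates between the cubic moment of Theorem \ref{young} and the Weyl count $\#\{T\le t_j\le T+1\}\ll T$ from \eqref{weyllaw}:
\begin{equation*}
\sum L(u_j\times\chi_D,1/2)^2 \le \Bigl(\sum L(u_j\times\chi_D,1/2)^3\Bigr)^{2/3}T^{1/3}\ll \abs{D}^{2/3+\e}(1+T)^{1+\e}.
\end{equation*}
The second factor is controlled by the standard length-$1$ spectral mean Lindel\"of bound $\sum_{T\le t_j\le T+1}L(u_j,1/2)^2\ll (1+T)^{1+\e}$, derivable from Kuznetsov's formula. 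Multiplying by $\sqrt{\abs{D}}$ from \eqref{Waldspurger-Zhang} and dividing by $h(D)^2\gg\abs{D}^{1-\e}$ from Siegel's bound \eqref{siegel} yields the claim.

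The Eisenstein case proceeds in parallel. Substituting \eqref{average-Eisenstein} and using $\abs{\zeta(1+2it)}^{-1}\ll\log(2+\abs{t})$, Cauchy--Schwarz reduces the task to bounding $\int_T^{T+1}\abs{L(1/2+it,\chi_D)}^4\,dt$ and $\int_T^{T+1}\abs{\zeta(1/2+it)}^4\,dt$. The former, by H\"older combined with the $L^6$ part of Theorem \ref{young}, is $\ll(\abs{D}(1+T))^{2/3+\e}$; the latter is $\ll\log^4(2+T)$ by the classical fourth moment estimate. Multiplying by $\sqrt{\abs{D}}$ and dividing by $h(D)^2$ again yields the claim, with even more room to spare in the $T$-aspect.

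The main technical input is the length-$1$ spectral second moment $\sum_{T\le t_j\le T+1}L(u_j,1/2)^2\ll(1+T)^{1+\e}$; everything else is routine once the Waldspurger--Zhang formula and the cubic moment of Theorem \ref{young} are in hand. If one prefers to sidestep this bound, one can instead apply H\"older with exponents $3$ and $3/2$ directly, $\sum a_jb_j\le(\sum a_j^3)^{1/3}(\sum b_j^{3/2})^{2/3}$, and bound $\sum L(u_j,1/2)^{3/2}$ by Cauchy--Schwarz from the first and second spectral moments of $L(u_j,1/2)$, which are both well-studied.
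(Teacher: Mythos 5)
Your proposal follows the same skeleton as the paper's proof: substitute the Waldspurger--Zhang formula \eqref{Waldspurger-Zhang} and its Eisenstein analogue \eqref{average-Eisenstein}, remove the denominators via the Hoffstein--Lockhart bound and the lower bound for $\zeta(1+2it)$, exploit Young's cubic moment through H\"older, and finish with Siegel's lower bound for $h(D)$. The only structural difference is how H\"older is arranged: the paper applies a single three-factor H\"older with exponents $(1/3,1/3,1/3)$ to $\sum L(u_j\times\chi_D,1/2)\,L(u_j,1/2)\cdot 1$, pairing Theorem \ref{young} with Ivi\'c's short-interval cubic moment of $L(u_j,1/2)$ and the Weyl count, whereas you first apply Cauchy--Schwarz and then treat each square separately. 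The numerology is identical (the reduced sum is $\ll D^{1/3+\e}(1+T)^{1+\e}$ either way), and your auxiliary input $\sum_{T\le t_j\le T+1}L(u_j,1/2)^2\ll(1+T)^{1+\e}$ is available --- most economically by H\"older from the very cubic moment of Ivi\'c that the paper uses, so the two routes really draw on the same reservoir of results.

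One step is misstated. In the Eisenstein case you claim $\int_T^{T+1}\abs{\zeta(1/2+it)}^4\,dt\ll\log^4(2+T)$ ``by the classical fourth moment estimate.'' That asymptotic, $\int_0^T\abs{\zeta(1/2+it)}^4\,dt\sim T\log^4T/(2\pi^2)$, concerns the full range and does not localize to unit windows; no bound anywhere near $\log^4T$ is known for an individual interval $[T,T+1]$. The paper sidesteps this by using the sixth moment $\int_T^{T+1}\abs{\zeta(1/2+it)}^6\,dt\ll(1+T)^{1+\e}$, which is simply Weyl's pointwise subconvexity bound $\zeta(1/2+it)\ll t^{1/6+\e}$ integrated trivially. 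Your argument survives, because the same pointwise bound gives $\int_T^{T+1}\abs{\zeta(1/2+it)}^4\,dt\ll(1+T)^{2/3+\e}$, and inserting this in place of $\log^4(2+T)$ still yields $D^{-1/6+\e}(1+T)^{2/3+\e}$ after multiplying by $\sqrt{\abs{D}}$ and dividing by $h(D)^2\gg D^{1-\e}$, well within the stated bound --- but the justification should be Weyl's bound (or the sixth-moment estimate), not the fourth moment asymptotic.
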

\begin{proof}
Ivi\'c  proved \cite{Ivic:2001} that
\begin{equation}
  \label{ivic}
  \sum_{T\leq t_j\leq
      T+1}L(u_j,1/2)^3=O((1+T)^{1+\e})
\end{equation}
and the bound  
\begin{equation}
  \label{weyl}
  \int_{T}^{T+1}\abs{\zeta(1/2+it)}^6dt=O((1+T)^{1+\e})
\end{equation}
follows from the classical Weyl estimate on the Riemann zeta function on the critical line, see e.g. \cite[Theorem 5.5]{Titchmarsh:1986a}.

We will also need lower bounds
\begin{equation}\label{lower-symmetricsquare}
  L(\hbox{sym}^2u_j,1)\gg_\e \abs{t_j}^{-\epsilon},\textrm{ and } \zeta(1+it)\gg \log\log\abs{t}/\log\abs{t}.
\end{equation}
The first bound is due to Lockhart and Hoffstein \cite[Thm
0.2]{HoffsteinLockhart:1994}, and the second is classical
\cite[Thm. 5.17]{Titchmarsh:1986a}.

We use first \eqref{Waldspurger-Zhang} and
\eqref{average-Eisenstein} and the H\"older inequality with exponents $(1/3,1/3,1/3)$ so that we can apply Theorem \ref{young}.  With the help of   \eqref{ivic}, \eqref{weyl},
\eqref{lower-symmetricsquare}, and  \eqref{weyllaw} we get
\begin{equation*}
  \sum_{T\leq t_j\leq
    T+1}\lvert\sum_{z\in\Lambda_D}u_j(z)\rvert^2\ll (D^{\frac 5 6 }(1+T))^{1+\e},
\end{equation*}
\begin{equation*}
\int_T^{T+1}\lvert\sum_{z\in\Lambda_D}E(z,1/2+it)\rvert^2dt\ll (D^{\frac 5 6 }(1+T))^{1+\e}.
\end{equation*}
Using the lower bound in \eqref{siegel} for $h(D)$, we establish  the claim.
\end{proof}

\begin{remark}\label{bound-on-GRH}
We notice that on GRH, or more precisely  \ref{Lindelof-conjecture}, we may replace $D^{-1/6+\varepsilon}$ by $D^{-1/2+\varepsilon}$ in both bounds.

\end{remark}

\section{Sup-norm estimates}
The general 
bound for the sup-norm is  
\begin{equation}\label{convexity-bound}
  \norm{u_j}_\infty=O((1+\abs{t_j})^{1/2}),
\end{equation}
see e.g. \cite{SeegerSogge:1989a} or Theorem \ref{weight-k-convexity} below.
This is sometimes called the convexity bound for sup-norms. 
In a ground-breaking work  Iwaniec and Sarnak \cite{iwaniecsarnak:1995a} showed that this bound may be improved 
for $\pslz$ to
\begin{equation}\label{iwaniec-sarnak}
  \norm{u_j}_\infty=O((1+\abs{t_j})^{1/2-1/12+\varepsilon}).
\end{equation}
Denote by $\norm{u}_{\infty,K}$  the supremum of a function $u$
  restricted to the set $K$. They also conjectured that if we restrict  to a compact set $K$ the sup-norm
is essentially bounded
  \begin{equation*}\norm{u_j}_{\infty,K}=O((1+\abs{t_j})^{\varepsilon}).\end{equation*} 
In \cite{iwaniecsarnak:1995a} the restriction  to compact sets is not
explicit but is now known to be needed, see \cite{Sarnak:2004c}.  
  More precisely we have 
\begin{equation*}\norm{u_j}_\infty\geq C(1+\abs{t_j})^{1/6-\varepsilon}.\end{equation*} This is a
purely analytic fact that follows from the properties of $K$-Bessel functions.

After the work of Iwaniec and Sarnak there has been a lot of results about  subconvexity of the  sup-norm in the level aspect \cite{BlomerHarcosMichel:2007}, weight
aspect \cite{DasSengupta:2013}, for holomorphic forms, hybrid bounds \cite{Templier:2015, Saha:2015}, as well as
other groups \cite{BlomerMaga:2016a, Marshall:2014}. The bound \eqref{iwaniec-sarnak} has not been improved.

We now discuss average bounds on sup-norms. In particular we prove Theorem \ref{sup-norm-average-intro}, which states that on average in a window of size $T$ we can improve
\eqref{iwaniec-sarnak} to
$\norm{u_j}_\infty=O((1+\abs{t_j})^{1/2-1/8+\varepsilon})$. 

The Maa{\ss} cusp forms have Fourier expansions 
\begin{equation*}
  u_j(z)=\sum_{n \neq 0}\rho_j(n)\sqrt{y}K_{it_j}(2\pi
  \abs{n}y)e^{2\pi in x}.
\end{equation*}
If $u_j$ is also a Hecke eigenform with Hecke
eigenvalues $\lambda_j(n)$,   then
$\rho_j(n)=\rho_j(\hbox{sign}(n))\lambda_j(|n|)$. 
It is known \cite[Prop. 19.6]{DukeFriedlanderIwaniec:2002a} 
that
\begin{equation}\label{strong-bound}
  \sum_{n\leq x}\abs{\lambda_j(n)}^2=O(x^{1+\varepsilon}\abs{t_j}^\varepsilon).
\end{equation}
Moreover, we set $\rho_j(n)=\cosh (\pi t_j/2)v_j(n)$, so that
$v_j(n)=v_j(1)\lambda _j (n)$. It is known \cite{Iwaniec:1990a, HoffsteinLockhart:1994}  that 
\begin{equation}
  \label{HL}
\abs{t_j}^{-\varepsilon}\ll_\varepsilon\abs{v_j(1)}\ll_\varepsilon \abs{t_j}^{\varepsilon}.
\end{equation}
\begin{proof}[Proof of Theorem \ref{sup-norm-average-intro}]
This is an adaptation of the proof for individual bounds in \cite{iwaniecsarnak:1995a}.
  We quote \cite[p.~678]{BlomerHolowinsky:2010} for the following
  crucial inequality: Assume that $T\leq t_j\leq T+1$. Then
\begin{align}
    \label{crucial-inequality}
    \nonumber\abs{u_j(z)}^2&\abs{\sum_{l\leq
        L}\alpha_l\lambda_j(l)}^2\ll_\varepsilon
    (LT)^\varepsilon \left(T\sum_{l\leq
      L}\abs{\alpha_l}^2+(L+y)T^{1/2}\big(\sum_{l\leq L}\abs{\alpha_l}\big)^2\right)
  \end{align}
for every sequence $\alpha_n$. This improves on
\cite[Eq. (A.12)]{iwaniecsarnak:1995a}  by replacing $yL^{1/2}$
by $y$.

  Now we choose a smooth  non-negative function $h$ supported in $[1,2]$ with
integral $\int_{\mathbb R} h(t)dt=1$, and consider $h_N(t)=h(t/N)$. 
By choosing $\alpha_n=h_N(n)\lambda_j(n)\abs{v_j(1)}^2$ and using Cauchy--Schwarz on 
the last sum,  we arrive at
\begin{align}
 \nonumber   \abs{u_j(z)}^2&\abs{\sum_{n\leq
       2N}h_N(n)\abs{v_j(n)}^2}^2\\&\ll_\epsilon
    (TN)^\varepsilon\left(T+(N+y)T^{1/2}N\right)\sum_{n\leq
      2N}h_N(n)^2\abs{v_j(n)}^2\\
\nonumber&\ll_\epsilon (NT)^\epsilon\left(T+(N+y)T^{1/2}N\right)N,
\end{align}
where we have used \eqref{strong-bound} and \eqref{HL}. Luo and Sarnak
\cite[p.~233]{LuoSarnak:1995} proved
Iwaniec' mean Lindel\"of conjecture for the Rankin--Selberg convolution in the spectral aspect,  which allowed them to prove  that 
\begin{equation}\label{sum-of-squares}
  \sum_{n=1}^{2N}
  h_N(n)\abs{v_j(n)}^2=\frac{12}{\pi^2}N+r(t_j,N),
\end{equation}
where the reminder is of size $N^{1/2}$ on average:
\begin{equation}\label{this-is-the-beef}
  \sum_{t_j\leq T}\abs{r(t_j,N)}=O(T^{2+\varepsilon}N^{1/2}).
\end{equation}
When we square the right-hand side of \eqref{sum-of-squares}, the term
$(r(t_j,N))^2$ may be dropped by positivity and we find
\begin{equation}\label{ontheway}
\abs{u_j(z)}^2\ll_\varepsilon N^{-2}\left( (NT)^\epsilon\left(T+(N+y)T^{1/2}N\right)N+\abs{u_j(z)}^2N\abs{r(t_j,N)}\right).
\end{equation}
 We first consider the set $A_N=\{z\in \GmodH; y\leq N\}$. We use the subconvexity
 bound \eqref{iwaniec-sarnak} on $u_j$ on the right-hand side of 
 \eqref{ontheway} to see that
 \begin{equation}
\norm{u_j}_{\infty,A_N}^2\ll_\varepsilon N^{-2}\left( (NT)^\epsilon\left(T+NT^{1/2}N\right)N+T^{2(1/2-1/12)}N\abs{r(t_j,N)}\right).
\end{equation}
 Averaging over $t_j$ we find by \eqref{this-is-the-beef} and Weyl's
 law \eqref{weyllaw} that
 \begin{equation*}
   \sum_{1\leq t_j \leq
     T}\norm{u_j}_{\infty, A_N}^2\ll_{\varepsilon}(NT)^\varepsilon N^{-2}\left(T^3N+N^3T^{5/2}+T^{2+2(1/2-1/12)}N^{3/2}\right).
 \end{equation*}
We choose $N=T^{1/4}$ so that the right-hand side is  $O(T^{3-1/4+\varepsilon})$.

For the complement of the set $A_N$, i.e. for $B_N=\{z\in \GmodH; y> N\}$ we argue as follows:
We have set $N=T^{1/4}$. We bound  
$\norm{u_j}_{\infty, B_N}^2$
  individually as $O(T^{2(1/2-1/8)+\e})$
using the  following   simple upper bound  
\begin{equation*}
\abs{u_j(z)}\ll t_j^\varepsilon\left((t_j/y)^{1/2}+t_j^{1/6}\right),
\end{equation*}see \cite[Lemma A.1$'$]{Sarnak:2004c}. We finish the proof by noticing that
\begin{equation*} \norm{u_j}^2_\infty\leq \max(\norm{u_j}_{\infty, A_N}^2, \norm{u_j}_{\infty, B_N}^2).\end{equation*}
\end{proof}

If we consider sup-norms of averages instead of averages of sup-norms
we have much better bounds. 
\begin{prop}\label{sup-of-averages}
Let $\{u_j\}$ be the $L^2$-normalized Maa{\ss} cusp forms for $\pslz$. Then
  \begin{equation*}
\Big\lVert \sum_{T\leq t_j\leq 2T}\abs{u_j(z)}^2\Big\rVert_\infty=O(T^2).
  \end{equation*}
\end{prop}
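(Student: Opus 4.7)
The plan is to apply the pre-trace formula with a non-negative majorant of the spectral indicator, and to split the estimate on $z$ according to whether $\Im(z)$ is bounded by $T$ or not. Choose $h(r)\geq 0$ smooth, even, rapidly decreasing, with $h(r)\geq 1$ for $T\leq |r|\leq 2T$ and $h\ll 1$ uniformly, and let $k(u)$ be the point-pair kernel associated with $h$ via the inverse Selberg--Harish-Chandra transform. The pre-trace formula gives
\begin{equation*}
\sum_j h(t_j)\abs{u_j(z)}^2 + \frac{1}{4\pi}\int_\R h(t)\abs{E(z,1/2+it)}^2 dt = \sum_{\g\in\G}k(u(\g z, z)).
\end{equation*}
Dropping the non-negative Eisenstein integral and using $h\geq 1$ on $[T,2T]$, the cusp-form sum is bounded by the geometric side.

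In the bulk region $\Im(z)\leq T$, I would estimate the geometric side directly. The identity term is $\frac{1}{4\pi}\int h(r)r\tanh(\pi r)dr = O(T^2)$, since $h$ is of height $O(1)$ on intervals of length $O(T)$ near $\pm 3T/2$. The non-identity contributions split into an elliptic and compact hyperbolic part of size $O(T)$, coming from the rapid decay of $k(u)$ for $u$ bounded below, and a parabolic part $\sum_{n\neq 0}k(n^2/(4y^2))$ arising from translations $z\mapsto z+n$. Using that $k$ has peak height $O(T^2)$ concentrated on scale $u\sim T^{-2}$, this parabolic sum evaluates to $O(T\cdot\Im(z))$. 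For $\Im(z)\leq T$ the total is $O(T^2)$.

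For the cuspidal region $\Im(z)>T$, I would switch to the Fourier expansion. For any $t_j\in[T,2T]$ and any $n\neq 0$, the argument $2\pi\abs{n}y$ of $K_{it_j}$ exceeds $2\pi T>t_j$, placing it in the exponentially decaying regime. Using the uniform Bessel bound $\cosh(\pi t/2)\abs{K_{it}(x)}\ll x^{-1/2}e^{-x}$ for $x\geq t$, the Hoffstein--Lockhart estimate \eqref{HL}, and the geometric decay in $n$ (the sum is dominated by the term $n=1$), one obtains $\abs{u_j(z)}^2\ll t_j^\e e^{-4\pi y}$. Summing over the $O(T^2)$ eigenvalues in $[T,2T]$ yields $O(T^{2+\e}e^{-4\pi T}) = O(1)$, well within $O(T^2)$.

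The main obstacle is the geometric-side bound in the bulk region: one needs sharp quantitative control on the Selberg--Harish-Chandra transform of $h$, specifically confirming that $k(u)$ has peak $O(T^2)$ concentrated on scale $u\sim T^{-2}$ with the correct decay profile, so that the parabolic contribution is genuinely $O(T\cdot\Im(z))$ rather than larger. This is standard spectral machinery but is the technical core of the argument; the rest of the proof is organisational, with the cuspidal region handled cleanly by the exponential decay of Bessel functions.
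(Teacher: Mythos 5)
Your proof is correct and follows essentially the same route as the paper: a cut at height $\asymp T$, with the bulk controlled by the local Weyl law (which the paper simply cites as Proposition 7.2 of Iwaniec's book, uniform in $z$ for $y\ll T$, whereas you re-derive it from the pre-trace formula including the parabolic contribution $O(T\,\Im z)$), and the cuspidal region controlled by the exponential decay of $K_{it_j}(2\pi n y)$ once $2\pi y$ exceeds $t_j$ by a definite factor. The only imprecision is your uniform Bessel bound as stated for all $x\geq t$, which fails near the transition $x\approx t$; but in your regime $x\geq 2\pi y>\pi t_j$ the elementary bound $\abs{K_{it}(x)}\leq K_0(x)\ll x^{-1/2}e^{-x}$ already beats $\cosh(\pi t_j/2)$, exactly as in the paper's own integral-representation argument.
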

\begin{proof} Notice first that, if we restrict $z$ to a compact set $K$ the inequality  in  \cite[Prop. 7.2]{Iwaniec:2002a} gives 
  \begin{equation*}
 \sum_{T\leq t_j\leq 2T}\abs{u_j(z)}^2=O(T^2) ,  
  \end{equation*} where the implied constant depends on $K$ and the
  group, but \emph{not} on $z$.
Even if we do not restrict to a compact set the same bound  holds for
$y\leq 2T$, say, by the same inequality. To bound the average for $y\geq 2T$ we use the decay
properties of the $K$-Bessel function.  We use the integral representation
\begin{equation*}
  K_{it}(y)=\int_0^\infty e^{-y\cosh(v)}\cosh(itv)dv,
\end{equation*} 
see \cite[p.~205]{Iwaniec:2002a}.
Fix $y\ge 0$. For $t$ real we have $|\cosh(itv)|\leq 1$. We use $\cosh(v)\geq 1+v^2/2$   to get 
\begin{equation*}
  \abs{K_{it}(y)}\leq\int_0^\infty e^{-y(1+v^2/2)}dv\leq \frac{\sqrt{\pi}e^{-y}}{\sqrt{2}\sqrt{y}}.
\end{equation*}
 The  bound  
 \begin{equation*}\rho_j(n)=O(e^{\pi  t_j/2}\sqrt{\abs{n}}t_j^{\varepsilon})\end{equation*}
 follows trivially from \eqref{strong-bound} and \eqref{HL}. 
We can now prove  good decay properties for $\abs{u_j}$ when $y$ is large
compared to $t_j$, e.g.  when $y>2T$ and $T\leq t_j\leq 2T$ we have
\begin{equation*}\abs{u_j(z)}=O\left(\sum_{n=1}^\infty e^{\pi
  t_j/2}\sqrt{n}t_j^{\varepsilon}\sqrt{y}\frac{e^{-2\pi n
    y}}{\sqrt{ny}}\right)=O( e^{-T}).\end{equation*} The claim follows using
Weyl's law (\ref{weyllaw}).
\end{proof}
\begin{remark}
  We note that the proof of Proposition \ref{sup-of-averages} is
  much simpler than that of Theorem \ref{sup-norm-average-intro}. The
  only input  is
  the use of the local Weyl law \cite[Prop. 7.2]{Iwaniec:2002a} and bounds
  on the Fourier coefficients that are uniform in $t_j$ and $n$.
\end{remark}

We also need a similar result for the Eisenstein series.
\begin{prop}\label{sup-of-averages-Eisenstein}
Let $K$ be a compact set on $\GmodH$. Then 

    \begin{equation*}
    \norm{\int_{T}^{2T}\abs{E(z,1/2+it)}^2dt}_{\infty,K}=O(T^2).
  \end{equation*}
\end{prop}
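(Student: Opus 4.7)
The plan is to mimic the strategy of Proposition \ref{sup-of-averages}, but now the relevant ``local Weyl law'' must incorporate the continuous spectrum. Since the Eisenstein series grows like $y^{1/2}$ in the cusp, one cannot expect a uniform bound over all of $\GmodH$, which is precisely why the statement is restricted to a compact set $K$.

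The key input is the extended pre-trace formula / local Weyl law, which for $\G=\pslz$ asserts that uniformly for $z$ in a compact set $K\subset\GmodH$
\begin{equation*}
\sum_{\abs{t_j}\leq T}\abs{u_j(z)}^2+\frac{1}{4\pi}\int_{-T}^{T}\abs{E(z,1/2+it)}^2 dt=O_K(T^2),
\end{equation*}
see \cite[Prop.~7.2]{Iwaniec:2002a} (applied to the full spectrum, cuspidal plus continuous). First I would apply this bound at levels $2T$ and $T$ and subtract, which yields
\begin{equation*}
\sum_{T\leq \abs{t_j}\leq 2T}\abs{u_j(z)}^2+\frac{1}{4\pi}\int_{T\leq\abs{t}\leq 2T}\abs{E(z,1/2+it)}^2 dt=O_K(T^2).
\end{equation*}
Since both terms on the left-hand side are non-negative, dropping the cuspidal sum preserves the inequality, and restricting the integration to $t\in[T,2T]$ (again using non-negativity of the integrand) gives the desired bound $O(T^2)$ uniformly for $z\in K$.

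There is essentially no serious obstacle beyond citing the correct form of the local Weyl law with continuous spectrum. The only point worth emphasising is that the implied constant genuinely depends on $K$: one must restrict to a compact set because the constant term of $E(z,1/2+it)$ contributes $y^{1/2+it}+\varphi(1/2+it)y^{1/2-it}$, which is of size $y^{1/2}$, so no uniform bound in the cusp is possible. Alternatively, if one prefers not to invoke the joint spectral statement, the same argument could be carried out by expanding $\abs{E(z,1/2+it)}^2$ spectrally via Parseval/Plancherel for automorphic forms and truncating the $t$-integration, but the direct route through \cite[Prop.~7.2]{Iwaniec:2002a} is the cleanest.
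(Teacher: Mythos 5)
Your proposal is correct and follows essentially the same route as the paper, whose entire proof is the single line that the statement ``follows directly from \cite[Prop.~7.2]{Iwaniec:2002a}''; you have simply spelled out the dyadic subtraction and the positivity argument that make this citation work. No further comment is needed.
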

\begin{proof}
  This follows directly from \cite[Prop. 7.2]{Iwaniec:2002a}.
\end{proof}

\section{Bounds for weight $k$ eigenfunctions.}
In this section we discuss bounds on weight $k$
eigenfunctions. We need this later when we estimate certain
inner products of eigenfunctions. This is important in order to bound derivatives of
cusp forms and Eisenstein series in the proof of Theorem
\ref{coefficient-bounds} below. 

 For the following discussion we adopt the notation and terminology
 from Fay \cite{Fay:1977a}. In this section we allow $\Gamma$ to be
 any discrete subgroup of $\psl$ unless explicitly stated otherwise.

 Let $k$ be an integer and let ${\mathfrak F}_k$ be the space of functions  $f: \mathbb H\to \mathbb C$ satisfying 
\begin{equation*}f(\gamma z)=\left(\frac{cz+d}{c\bar z+d}\right)^kf(z), \quad \gamma\in \Gamma .\end{equation*}
The $k$-Laplacian is defined by 
\begin{equation*}\Delta_k=y^2\left(\frac{\partial^2}{\partial x^2}+\frac{\partial ^2}{\partial y^2}\right)-2i k y \frac{\partial }{\partial x}.\end{equation*}
We write the eigenvalue equation as $\Delta_k u+s(1-s)u=0$ with $s=1/2+it$.
The raising and lowering operators are defined on ${\mathfrak F}_k$ by
\begin{equation*}K_k=(z-\bar z)\frac{\partial }{\partial z}+k, \quad L_k=(\bar z-z)\frac{\partial}{\partial \bar z}-k.\end{equation*}
It is well-known that if $f\in {\mathfrak F}_k$ is differentiable, then
$K_k f\in {\mathfrak F}_{k+1}$, while  $L_k f\in {\mathfrak F}_{k-1}$. Moreover, $\Delta_{k+1}K_k=K_{k}\Delta_k$, $\Delta_kL_{k+1}= L_{k+1} \Delta_{k+1}$.
It is clear that if $f$ is an eigenfunction of $\Delta_k$ with eigenvalue $\lambda$, i.e. $(\Delta_k+\lambda)f=0$, 
then $K_kf$ (resp. $L_{k}f$) is an eigenfunction of $\Delta_{k+1}$
(resp. $\Delta_{k-1}$) with the same eigenvalue. For $f\in {\mathfrak
  F}_k$ and $g\in {\mathfrak F}_l$ we have the product rules: If $k,l$
are integers, then 
\begin{equation}\label{leibniz}K_{k+l}(fg)=(K_kf)g+f(K_l g), \quad L_{k+l}(fg)=(L_kf)g+f(L_l g).\end{equation}
We use polar
coordinates centered at a point $z_0$, defined through \begin{equation*}\frac{z-z_0}{z-\bar z_0}=\tanh (r/2)e^{i\theta},\end{equation*} with $r=r(z, z_0)$ the hyperbolic distance and $\theta=\theta(z, z_0)\in [0, 2\pi]$. Set $v=\cosh r$. We remark that in polar coordinates $d\mu (z)=\sinh r drd\theta.$ We need the radial expansion of $f$, given in Theorem 1.2 in \cite{Fay:1977a}.
For $f$ an eigenfunction of $\Delta_k$ with eigenvalue $s(1-s)$ on  a disk $r(z, z_0)<R$ we have
\begin{equation}\label{spherical-expansion}f(z)\left( \frac{z-\bar{z_0}}{ z_0-\bar z}\right)^k=\sum_{n\in \mathbb Z}f_n(z_0)P_{s, k}^n(z, z_0)e^{in\theta}\end{equation}
with $P_{s, k}^n(z, z_0)=P_{s, k}^n(r)$ given by \begin{equation*}P_{s, k}^n(r)=\left(\frac{v-1}{v+1}\right)^{|n|/2}\left(\frac{2}{1+v}\right)^s F(s-k_n, s+k_n+|n|, 1+|n|;(v-1)/(v+1)).\end{equation*} Here $F(a, b, c; z)$ is the Gauss hypergeometric function and $k_n=kn/|n|$ for $n\ne 0$ and $k_0=k$. 
We can recover the coefficients $f_n(z_0)$ by the formula
\begin{equation}\label{taylor}
n!f_n(z_0)=n!\overline{\bar f_{-n}(z_0)}=K_{k+n-1}K_{k+n-2}\cdots K_{k+1}K_k f(z_0), \quad n> 0,
\end{equation}
see \cite[Eq. 23]{Fay:1977a}.

In this section we will prove the following lemma, which is crucial in the proof of Theorem  \ref{coefficient-bounds}.
\begin{lem}\label{sup-sup-bounds}
Let $f\in{\mathfrak F}_k$ be an eigenfunction of $\Delta_k$ with
eigenvalue $s(1-s)$ and let $C$ be a compact subset of $\GmodH$.
Then there exists a compact subset $C'$ of $\GmodH$ containing $C$
such that 
\begin{equation*}
\norm{K_kf}_{\infty, C}\ll \abs{s}\norm{f}_{\infty, C'}, \quad \norm{L_kf}_{\infty, C}\ll \abs{s}\norm{f}_{\infty, C'} .
\end{equation*}
\end{lem}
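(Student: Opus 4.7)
Fix $z_0\in C$. The plan is to use the radial expansion \eqref{spherical-expansion} of $f$ around $z_0$ together with the identity $K_k f(z_0)=f_1(z_0)$ extracted from \eqref{taylor} at $n=1$. Isolating $f_1(z_0)$ by Fourier analysis in $\theta$ yields
\begin{equation*}
f_1(z_0)\, P_{s,k}^1(r) = \frac{1}{2\pi}\int_0^{2\pi} f(z(r,\theta))\left(\frac{z-\bar z_0}{z_0-\bar z}\right)^k e^{-i\theta}\, d\theta,
\end{equation*}
where $z(r,\theta)$ denotes the point with polar coordinates $(r,\theta)$ around $z_0$. Since $\abs{(z-\bar z_0)/(z_0-\bar z)}=1$, the right-hand side is at most $\norm{f}_{\infty,B(z_0,r)}$ in absolute value, provided $P_{s,k}^1(r)\neq 0$.

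The proof then reduces to choosing $r$ so that $\abs{P_{s,k}^1(r)}$ is bounded below by a constant multiple of $1/\abs{s}$. I would take $r_0=c/\abs{s}$ with $c>0$ small and independent of $s$. Then $v=\cosh r_0$ satisfies $(v-1)/(v+1)=c^2/(4\abs{s}^2)+O(1/\abs{s}^4)$ and $(2/(1+v))^s=1+O(1/\abs{s})$, so the prefactor $((v-1)/(v+1))^{1/2}$ contributes $\sim c/(2\abs{s})$. For the Gauss hypergeometric factor $F(s-k,s+k+1,2;(v-1)/(v+1))$, each term of the defining power series is dominated by $c^{2j}/(j!(j+1)!)$, using $\abs{(s-k)_j(s+k+1)_j}\le (\abs{s}+j)^{2j}$ which is $\le (2\abs{s})^{2j}$ for $j\le \abs{s}$ and is absorbed by the factorials for $j>\abs{s}$; hence the hypergeometric factor equals $1+O(c^2)$ uniformly in $\abs{s}$. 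This gives $\abs{P_{s,k}^1(r_0)}\gg 1/\abs{s}$, so $\abs{K_k f(z_0)}=\abs{f_1(z_0)}\ll \abs{s}\,\norm{f}_{\infty,B(z_0,c/\abs{s})}$.

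Choose a compact set $C'\subset\GmodH$ containing an open neighborhood of $C$; then $B(z_0,c/\abs{s})\subset C'$ uniformly in $z_0\in C$ once $\abs{s}$ exceeds a threshold depending on $C,C'$, and the bound on $\norm{K_k f}_{\infty,C}$ follows. For bounded $\abs{s}$ the same argument works with a fixed small $r$. The bound on $L_k f$ is obtained analogously by extracting the $n=-1$ Fourier coefficient from \eqref{spherical-expansion} and invoking the companion of \eqref{taylor} for $f_{-1}(z_0)$ (or, equivalently, by applying the $K_k$-argument to $\bar f\in\mathfrak{F}_{-k}$ and using that $L_k f$ is related to $K_{-k}\bar f$ by conjugation). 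The main obstacle is the quantitative lower bound on $\abs{P_{s,k}^1(r_0)}$ in the middle step; the rest is standard Fourier analysis and an elementary estimate of the hypergeometric series at a small argument with large parameters.
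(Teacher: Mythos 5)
Your proof is correct and follows essentially the same route as the paper: both extract $f_1(z_0)=K_kf(z_0)$ from the radial expansion via Fourier analysis in $\theta$ and reduce everything to a lower bound on $P^1_{s,k}$ at radius $\asymp c/\abs{s}$ coming from the Taylor expansion of the hypergeometric factor. The only difference is that you evaluate at a single radius and use the pointwise bound $\abs{P^1_{s,k}(c/\abs{s})}\gg 1/\abs{s}$ (which your estimate, and the paper's own asymptotic $P^1_{s,k}(r)=(r/2)(1+O_k(s^2r^2))$, justify for $c$ small), whereas the paper averages the orthogonality identity over the disc of radius $c\abs{s}^{-1}$ and uses $L^1$ and $L^2$ bounds for $P^1_{s,k}$ instead; both are valid here.
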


\begin{proof}
We set $h(z)=f(z) (z-\bar{z_0})^k/( z_0-\bar z)^k$.
Since $P^n_{s, k}(z_0, z_0)=0$ for $n\ne 0$ and $P^0_{s, k}(z_0, z_0)=1$, we have $f(z_0)=f_0(z_0)$. By (\ref{spherical-expansion}) we get
\begin{equation*}\int_0^{2\pi} h(z)e^{-in\theta}\,d\theta
=2\pi f_n(z_0)P^n_{s, k}(z, z_0).\end{equation*} 
Let $A=A(r_1, r_2)=\{z; r_1\le r(z,
z_0)\le r_2\}$  be a disc or annulus centered at $z_0$. We multiply
with $\overline{P^n_{s, k}(r)}$, and integrate the radial variable to get
\begin{align*}\int_{A}h(z) e^{-in\theta}\overline{P^n_{s, k}(z, z_0)}\, d\mu (z)=&
\int_{A}f_n(z_0) |{P^n_{s, k}(z, z_0)}|^2\, d\mu (z).\end{align*}
This implies the crucial identity
\begin{equation}\label{crucial}f_n(z_0)=\frac{\int_{A}h(z) e^{-in\theta}\overline{P^n_{s, k}(z, z_0)}\, d\mu (z)}{\int_{A}|P^n_{s, k}(z, z_0)|^2\, d\mu (z)}.\end{equation}
We apply \eqref{crucial} for $n=1$ and choose $A$ to be the disc of radius $c|s|^{-1}$, for a sufficiently small constant $c$ to be chosen.  We get
\begin{equation*}|f_1(z_0)|\le \frac{\norm{h}_{\infty, A}\int_A|P^1_{s, k}(r)|d\mu (z)}{\int_A|P^1_{s, k}(r)|^2d\mu (z)}.\end{equation*}
If we can prove that 
\begin{equation}\label{l1-l2-bounds-for-P}\int_A|P^1_{s, k}(r)|d\mu (z)\ll |s|^{-3}, \quad \int_A|P^1_{s, k}(r)|^2d\mu (z)\gg |s|^{-4},\end{equation}
then, by \eqref{taylor}, we have $|K_kf(z_0)|\ll |s| \norm{f}_{\infty, A}.$ By compactness of $C$, we can find a compact set $C'\subset \mathbb H$ containing $C$ such that
\begin{equation*}\norm{K_kf}_{\infty, C}\ll  |s| \norm{f}_{\infty, C'} .\end{equation*}
To prove  \eqref{l1-l2-bounds-for-P}
we need to study the asymptotics of $P_{s, k}^n(r)$ jointly for $r$
small and $t=\Im (s)\to \infty.$ For simplicity let $n$ be nonnegative.
We can approximate the hypergeometric function $F(a, b, c ;z)$ by its Taylor polynomials, see \cite[Eq. (4.13), (4.14)]{Good:1981b}:
\begin{equation*}
F(a, b, c;z)=\sum_{j=0}^{J-1}\frac{(a)_j(b)_j}{(c)_j j!}z^j+O\left(\left|\frac{(a)_J(b)_Jz^J}{(c)_J J! }\right|\right)
\end{equation*}
uniformly in $a, b, c$ as long as
\begin{equation}\label{good-condition}
|z| \max_{j\ge 0}\left|\frac{(a+j)(b+j)}{(c+j)( j+1)}\right|\le \frac{1}{2}.
\end{equation}
It is easily verified that if $|r|<c|s|^{-1}$, then the condition of \eqref{good-condition} is satisfied and we can apply the Taylor series for $F(a, b, c, z)$ with $J=2$ to get
\begin{equation*}\label{P-short-asymptotics}
P^{n}_{s, k}(r)
=\left(\frac{r}{2}\right)^{n}\left(1+ \left(\frac{(s-k)(s+k+n)}{4(1+n)}-\frac{s}{4} -\frac{n}{12}\right)r^2+O(s^4r^4)     \right),\end{equation*}
cf. \cite[Eq. 17]{Fay:1977a}. We specialize to $n=1$ in the simpler form
\begin{equation*}P^1_{s, k}(r)=(r/2)(1+O_{k}(s^2r^2)).\end{equation*} We have 
\begin{equation}\label{vorueber}\int_0^{c/|s|} r^a \sinh r dr\sim c^{a+2} \frac{1}{(a+2)|s|^{a+2}} , \quad |s|\to\infty.\end{equation}
To investigate \eqref{l1-l2-bounds-for-P} we integrate in the $\theta$
variables. Then we apply \eqref{vorueber} for $a=1$ and $a=3$ to  get  the first inequality in \eqref{l1-l2-bounds-for-P}.
For the second inequality we  apply it for $a=2, 4, 6$. The terms with
$a=4,6$ get multiplied by $|s|^2$ and $|s|^4$ respectively. 
All three terms are of the same order of decay, i.e. $|s|^{-4}$. However, when we take into account the constant in the $O_{k}$ and the powers of $c$, we can  choose $c$ sufficiently small to  make the first term  the dominant term.

Finally we prove $\norm{L_k f}_{\infty, C}\ll \abs{s}\norm{f}_{\infty,
  C'}$:  If $f\in {\mathfrak F}_k$, then $\bar f\in {\mathfrak F}_{-k}$. We only need to observe that  
$L_k=\overline{K_{-k}}$, see \cite[Eq. (3)]{Fay:1977a}.
\end{proof}
\begin{remark}\label{tricky1}
Let $f\in{\mathfrak F}_k$. For $ j=1, 2, \ldots , m$
let $A_j$ be either a lowering or a raising operator such that $A_mA_{m-1}\cdots A_1 f$ makes sense. 
Repeated use of Lemma \ref{sup-sup-bounds}  shows that
\begin{equation*}\label{infinity-bound-maass-operators}
\norm{A_mA_{m-1}\cdots A_1 f}_{\infty, C}\ll
\abs{s}^m\norm{f}_{\infty, C'} .\end{equation*}
\end{remark}
We now consider $L^2$-norms of eigenfunctions. We denote by
$\norm{f}_{2,C}$ the $L^2$-norm of $f$ restricted to $C$. Assume now
that  $\G=\pslz$. Moreover, for the corresponding Eisenstein
series $E_k(z, s)$ of weight $k$ we denote $E_k^Y(z, s)$ the function
$E_k(z, s)-\chi_{[Y, \infty)}(y) (y^s+\phi _k(s)y^{1-s}).$ It is well-known that $\phi_0(s)=\xi(2s-1)/\xi (2s)$, where $\xi(s)$ is the completed Riemann zeta function. 
\begin{lem}\label{tricky} 
Let $C$ be a compactly supported
set. Assume that $C\subset \{z\in \H; \Im (z)\le Y\}.$ Then 
\begin{enumerate}[label=(\roman*)]
\item \label{one-one} for $f\in{\mathfrak F}_k$  an $L^2$-eigenfunction of $\Delta_k$
  with eigenvalue $1/4+t^2$ we have
\begin{align*}\norm{K_k f}_2,&\ll \abs{t}\norm{f}_2, \quad \norm{L_k f}_2\ll \abs{t}\norm{f}_2,
\end{align*}
\item \label{two-two}for a weight $k$ Eisenstein series $E_k(z,s)$ we have 
\begin{align*}
\norm{K_kE_k(\cdot, 1/2+it)}_{2,C}&\ll
  \abs{t}\norm{E^Y_k(\cdot, 1/2+it)}_{2},\\
\norm{L_kE_k(\cdot, 1/2+it)}_{2,C}&\ll
  \abs{t}\norm{E^Y_k(\cdot, 1/2+it)}_{2},
\end{align*}
\end{enumerate}
for $\abs{t}$ large.
\end{lem}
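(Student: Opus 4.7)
My plan has three parts: the eigenfunction case, the Eisenstein case (where a boundary term appears), and a remark on the main obstacle.

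For \textbf{part \ref{one-one}}, the plan is to reduce $\norm{K_k f}_2^2$ to the eigenvalue equation via the composition identity
\[
L_{k+1}K_k = \Delta_k - k(k+1),
\]
which follows from a direct calculation from the defining formulas for $K_k$, $L_k$, and $\Delta_k$. Since $f\in L^2(\GmodH)$ is cuspidal and therefore rapidly decaying at each cusp, integration by parts in $\bar z$ gives the adjointness identity
\[
\int (K_k f)\,\overline{K_k f}\, d\mu = -\int \bar f \cdot (L_{k+1}K_k f)\, d\mu
\]
with vanishing boundary contributions. Substituting $\Delta_k f = -(1/4+t^2)f$ yields
\[
\norm{K_k f}_2^2 = \bigl(\tfrac{1}{4}+t^2+k(k+1)\bigr)\norm{f}_2^2 \ll t^2 \norm{f}_2^2
\]
for $\abs{t}$ large. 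The bound for $L_k f$ is symmetric, using the parallel identity $K_{k-1}L_k = \Delta_k - k(k-1)$.

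For \textbf{part \ref{two-two}}, this strategy fails because $E_k(\cdot, 1/2+it)\notin L^2(\GmodH)$, so the integration by parts generates a boundary term. I work on the truncated region $\Omega = \{z\in\GmodH : \Im z \le Y\}\supset C$, and running the same computation on $\Omega$ produces
\[
\norm{K_k E_k}_{2,\Omega}^2 = \bigl(\tfrac14+t^2+k(k+1)\bigr)\norm{E_k}_{2,\Omega}^2 + \mathcal{B}(Y,t),
\]
where $\mathcal{B}(Y,t) = \int_0^1\bigl[K_k E_k \cdot \overline{E_k}/y\bigr]_{y=Y}\, dx$ is the flux through the horocycle $y=Y$. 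Three observations then close the argument. First, since $E^Y_k = E_k$ on $\{y<Y\}$, we have $\norm{E_k}_{2,\Omega}^2 \le \norm{E^Y_k}_2^2$. Second, the non-zero Fourier modes of $E_k$ decay exponentially in $y$, so $\mathcal{B}(Y,t)$ is determined up to exponentially small error by the zeroth Fourier coefficient: using $a_0(y) = y^s + \f_k(s)y^{1-s}$ and the direct computation $K_k a_0(y) = (s+k)y^s + (1-s+k)\f_k(s)y^{1-s}$, a short expansion gives $\abs{\mathcal{B}(Y,t)} = O(\abs{t})$. Third, the Maass--Selberg formula gives $\norm{E^Y_k}_2^2 \sim 2\log Y$, which is bounded below for $Y>1$. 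Combining these, $\norm{K_k E_k}_{2,\Omega}^2 \ll t^2 \norm{E^Y_k}_2^2$ for $\abs{t}$ large, and restricting to $C\subset \Omega$ yields the claim. The $L_k$ case is symmetric.

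The main obstacle is pinning down the bound $\abs{\mathcal{B}(Y,t)} = O(\abs{t})$. Since $\norm{E^Y_k}_2^2$ is of fixed order in $t$, the flux has to be at most of order $\abs{t}$ for it to be absorbed into the main term $t^2 \norm{E^Y_k}_2^2$; a weaker bound like $O(\abs{t}^2)$ would ruin the estimate. This requires the explicit Fourier-expansion calculation at the cusp together with the boundedness of $\f_k(1/2+it)$ on the critical line.
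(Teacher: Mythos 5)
Your part \ref{one-one} is fine: the identity $L_{k+1}K_k=\Delta_k-k(k+1)$ together with the adjointness $K_k^*=-L_{k+1}$ gives the exact formula $\norm{K_kf}_2^2=(1/4+t^2+k(k+1))\norm{f}_2^2$, which is precisely the content of the result \cite[Satz 3.1]{Roelcke:1966a} that the paper simply cites for this part. For part \ref{two-two} you take a genuinely different route from the paper. The paper avoids any boundary term by using the ladder identities $K_kE_k(z,s)=(s+k)E_{k+1}(z,s)$ and $L_kE_k(z,s)=(s-k)E_{k-1}(z,s)$, which reduce the claim to showing $\norm{E^Y_{k\pm1}(\cdot,1/2+it)}_2\ll\norm{E^Y_k(\cdot,1/2+it)}_2$; that in turn follows from the Maa{\ss}--Selberg relation $\norm{E^Y_k}_2^2=-\phi_k'/\phi_k(1/2+it)+O_Y(1)$ and the recursion $\phi_{k+1}(s)=\frac{k+1-s}{k+s}\phi_k(s)$, since $-\phi_k'/\phi_k(1/2+it)\to\infty$. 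That argument is clean because the only quantity ever estimated is the truncated $L^2$-norm itself.

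Your version has a genuine gap at exactly the step you flag as the main obstacle: the bound $\abs{\mathcal B(Y,t)}=O(\abs t)$. You justify discarding the non-zero Fourier modes on the horocycle $y=Y$ because they ``decay exponentially in $y$,'' but exponential decay in $y$ at a \emph{fixed} height $Y$ says nothing about smallness in the $t$-aspect. The $n$-th mode of $E_k(\cdot,1/2+it)$ at height $Y$ is governed by a Whittaker/Bessel factor that is only exponentially small once $nY\gg\abs t$; for $n\lesssim \abs t/Y$ the modes oscillate with no decay, and their total contribution to $\int_0^1\abs{E_k(x+iY,1/2+it)}^2\,dx$ is not exponentially small --- a crude Cauchy--Schwarz estimate of the flux using only the convexity sup-norm bound gives $O(t^{2+\e})$, which would \emph{not} be absorbable into $t^2\norm{E^Y_k}_2^2$. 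To close the argument you would need a genuine mean-value estimate on the horocycle (via the transition-range asymptotics of $K_{it}$ and divisor-sum bounds, or by averaging the truncation height $Y$ over a dyadic interval so that the local Weyl law applies), none of which is in your sketch. A secondary, harmless slip: the Maa{\ss}--Selberg relation gives $\norm{E^Y_k}_2^2\sim -\phi_k'/\phi_k(1/2+it)\sim 2\log\abs t$, not $\sim 2\log Y$; you only use the lower bound $\gg 1$, which does hold for $\abs t$ large.
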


\begin{proof}
The claim in \ref{one-one} follows from \cite[Satz 3.1]{Roelcke:1966a}. 

For \ref{two-two} we note that
\begin{equation*}K_kE_k(z, s)=(s+k)E_{k+1}(z, s), \quad L_kE_k(z, s)=(s-k)E_{k-1}(z, s),\end{equation*}
cf. \cite[Eq. (10.8),  (10.9)]{Roelcke:1966a}. It suffices to consider the $L^2$-norms of $E^Y_{k\pm 1}(z, 1/2+it)$. The Maa{\ss}--Selberg relations for weight $k$ Eisenstein series give
\begin{equation*}\norm{E^Y_{k}(\cdot, 1/2+it)}^2=-\frac{\phi'_k(1/2+it)}{\phi_k(1/2+it)}+O_Y(1),\end{equation*}
see \cite[Lemma 11.2, p.~301]{Roelcke:1966a}. Moreover, see \cite[Eq. (10.26)]{Roelcke:1966a}, we have
\begin{equation}\label{phi-recursion-on-k}\phi_{k+1}(s)=\frac{k+1-s}{k+s}\phi_k(s).\end{equation}  This gives 
\begin{equation*}
\norm{E^Y_{k+1}(\cdot, 1/2+it)}^2=-\frac{\phi'_k(1/2+it)}{\phi_k(1/2+it)}+O_Y(1)=\norm{E^Y_k(\cdot, 1/2+it)}^2+O_Y(1).
\end{equation*}
It follows recursively from (\ref{phi-recursion-on-k}) that  $-\frac{\phi'_k(1/2+it)}{\phi_k(1/2+it)}=-\frac{\phi'_0(1/2+it)}{\phi_0(1/2+it)}+o(1)$, as $\abs{t}\to \infty$. Since 
\begin{equation}-\frac{\phi'_0(1/2+it)}{\phi_0(1/2+it)}= 2\Re \frac{\Gamma'}{\Gamma}(1/2+it) +O\left(\frac{\zeta'(1+it)}{\zeta (1+it)} \right)+O(1),
\end{equation}
the result follows using Stirling's formula, which gives $\Gamma'(s)/\Gamma (s)\sim \log s$,  and Weyl's bound $\zeta'(1+it)/\zeta (1+it)\ll \log t/\log\log t.$
\end{proof}
\begin{remark}\label{tricky2}
Let $f$ be an $L^2$-eigenfunction of $\Delta_k$ or a weight $k$ Eisenstein series $E_k(z,s)$. For $ j=1, 2, \ldots , m$
let $A_j$ be either a lowering or a raising operator such that $A_mA_{m-1}\cdots A_1 f$ makes sense. 
Then repeated use of Lemma \ref{tricky}(i) shows that 
\begin{equation*}
\norm{A_mA_{m-1}\cdots A_1 f}_{2}\ll
\abs{s}^m\norm{f}_{2} ,\end{equation*} if $f$ is an $L^2$-eigenfunction.
Moreover, if $f$ is an Eisenstein series, then $A_mA_{m-1}\cdots A_1 f$ is another Eisenstein series of appropriate weight, times a polynomial of degree $m$ is $s$. A similar argument to Lemma \ref{tricky}(ii) gives
\begin{equation*}
\norm{A_mA_{m-1}\cdots A_1 f}_{2,C}\ll
\abs{s}^m\norm{f^Y}_{2} ,\end{equation*}
 Here $C$ is any compactly supported set.
\end{remark}
For completeness we also state and prove the convexity bound for weight $k$ eigenfunctions:
\begin{theorem}\label{weight-k-convexity}
Let $f\in{\mathfrak F}_k$ be an eigenfunction of $\Delta_k$ with eigenvalue $s(1-s)$ and let $C$ be a compact subset of $\GmodH$.
Then there exists a compact subset $C'$ of $\GmodH$ containing $C$ such that
\begin{equation*}\norm{f(z)}_{\infty, C}\ll_{\Gamma, C} |t|^{1/2}\left(\int_{C'} |f(z)|^2d\mu (z)\right)^{1/2}.\end{equation*}
\end{theorem}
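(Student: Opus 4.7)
The plan is to use the spherical expansion \eqref{spherical-expansion} centered at an arbitrary point $z_0 \in C$ and extract the $n=0$ coefficient, which recovers $f(z_0)$ itself since $P^n_{s,k}(z_0, z_0) = 0$ for $n \ne 0$ and $P^0_{s,k}(z_0, z_0) = 1$. The setup is the same as in Lemma \ref{sup-sup-bounds}, but crucially the hyperbolic disc $A$ around $z_0$ is to be of \emph{fixed} radius $R > 0$ rather than radius $\sim |s|^{-1}$: on such a larger disc the $L^2$-norm of the weight-$k$ spherical function $P^0_{s,k}$ exhibits genuine oscillatory decay, and this is what yields the Seeger--Sogge improvement over the trivial $|t|$ bound that the short-disc argument of Lemma \ref{sup-sup-bounds} would otherwise produce.

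Writing $h(z) = f(z)(z - \bar z_0)^k/(z_0 - \bar z)^k$, the factor $(z-\bar z_0)/(z_0-\bar z)$ is the quotient of a nonzero complex number by its own conjugate, hence of modulus one, so $|h| = |f|$. The identity \eqref{crucial} applied with $n = 0$ then reads
\begin{equation*}
f(z_0) = \frac{\int_A h(z)\,\overline{P^0_{s,k}(r)}\, d\mu(z)}{\int_A |P^0_{s,k}(r)|^2\, d\mu(z)},
\end{equation*}
and Cauchy--Schwarz on the numerator gives $|f(z_0)|^2 \le \int_A |f|^2\, d\mu\big/\int_A |P^0_{s,k}|^2\, d\mu$. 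Choose $R$ strictly smaller than the injectivity radius at every point of $C$ (uniformly positive by compactness), and let $C'$ be the image in $\GmodH$ of the $R$-neighbourhood of $C$, which is compact. Since $|f|^2$ is $\Gamma$-invariant and the disc $A$ injects into $\GmodH$, we have $\int_A |f|^2 \,d\mu \le \int_{C'}|f|^2\, d\mu$.

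The core of the argument is then the uniform lower bound
\begin{equation*}
\int_0^R |P^0_{s,k}(r)|^2 \sinh r\, dr \gg_R |t|^{-1} \quad \text{as } |t|\to\infty,
\end{equation*}
which combined with the preceding inequalities gives the theorem at once. Since $P^0_{s,k}(r) = \operatorname{sech}^{2s}(r/2)\,F(s-k, s+k, 1;\tanh^2(r/2))$, this reduces to a Mehler--Hilb-type asymptotic for the hypergeometric function with two conjugate large parameters: on any range $r_0 \le r \le R$ with $r_0 > 0$ fixed, one expects
\begin{equation*}
P^0_{s,k}(r) = \frac{c_k(r)}{\sqrt{|t|\sinh r}}\cos(t r + \psi_k(r)) + O(|t|^{-3/2})
\end{equation*}
with a bounded nonvanishing amplitude, so that squaring and integrating against $\sinh r\, dr$ the oscillatory term averages out and contributes of exact order $|t|^{-1}$. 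For weight $k = 0$ this is the classical asymptotic for the conical function $P_{-1/2+it}(\cosh r)$ which is precisely what underlies the Seeger--Sogge bound. The main obstacle is deriving this uniform weight-$k$ analogue in integrated form on a fixed interval, with constants independent of $t$; once that asymptotic is in hand, the rest of the proof is the direct Cauchy--Schwarz argument above, producing $|f(z_0)|^2 \ll_{C} |t| \int_{C'}|f|^2\, d\mu$ uniformly for $z_0 \in C$.
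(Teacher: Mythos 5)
Your proposal is correct and follows essentially the same route as the paper: extract the $n=0$ coefficient via \eqref{crucial}, apply Cauchy--Schwarz, and reduce everything to the lower bound $\int_A \abs{P^0_{s,k}}^2\,d\mu \gg |t|^{-1}$ over a region of fixed size (the paper uses an annulus $A(r_1,r_2)$ with $r_1>0$, precisely because the asymptotics fail at $r=0$, and then covers the resulting set by finitely many $\Gamma$-translates of a compact $C'$). The oscillatory asymptotic you flag as ``the main obstacle'' is not an obstacle but a citation: it is exactly \cite[Eq.~27]{Fay:1977a}, $P^0_{1/2+it,k}(r)=\tfrac{2}{|t|^{1/2}\sqrt{2\pi\sinh r}}\cos(rt-\pi/4)+O(|t|^{-1})$ uniformly for $r$ in a fixed annulus, which is what the paper uses to compute $\int_{r_1}^{r_2}\abs{P^0_{s,k}(r)}^2\sinh r\,dr=\tfrac{r_2-r_1}{\pi|t|}+O(|t|^{-3/2})$.
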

 
\begin{proof}
We imitate the argument in \cite{Sarnak:2004c}. We use \eqref{crucial}
for $n=0$ and apply the Cauchy--Schwarz inequality to get:
\begin{align*}|f(z_0)|&=\left|\frac{\int_{A}h(z) \overline{P^0_{s,
                        k}(z, z_0)}\, d\mu (z)}{\int_A|P^0_{s, k}(z,
                        z_0)|^2\, d\mu (z)}\right|\\
&\le\frac{(\int_A |h(z)|^2\, d\mu (z))^{1/2}(\int_A |P^0_{s, k}(z, z_0)|^2\, d\mu (z))^{1/2}}{{\int_A|P^0_{s, k}(z, z_0)|^2\, d\mu (z)}}\\ &=\frac{(\int_A |f(z)|^2\, d\mu (z))^{1/2}}{{(\int_A|P^0_{s, k}(z, z_0)|^2\, d\mu (z))^{1/2}}}\ll |t|^{1/2}\left(\int_A |f(z)|^2\, d\mu (z)\right)^{1/2},
\end{align*}
 if we can show for some annulus $A$ that
\begin{equation}\label{integrated-asymptotics}
\int_A|P^0_{s, k}(z, z_0)|^2\, d\mu (z)\gg |t|^{-1}.
\end{equation} 
To prove this we 
 need the asymptotic behavior of $P_{s, k}^0(r)$ as  $\Im (s)\to \infty$,  $\Re (s)$ fixed and $r>r_0$, see \cite[Eq. 27]{Fay:1977a}:
\begin{equation}\label{spherical-asymptotic}P^0_{1/2+it, k}(r)=\frac{2}{|t|^{1/2}\sqrt{2\pi \sinh r}}\cos (rt-\pi /4)+O(|t|^{-1}).\end{equation}
Since the asymptotics
in (\ref{spherical-asymptotic}) hold for $r$ away from $0$ it is
convenient to work in an annulus $A=A(r_1, r_2)=\{z; r_1<r(z,
z_0)<r_2\}$ centered at $z_0$. 
We have
\begin{align*}\int_{r_1}^{r_2}|P^0_{s, k}(r)|^2\sinh r
  dr&=\int_{r_1}^{r_2} \left(\frac{4}{|t|2\pi \sinh r}\cos^2(rt-\pi
      /4)+O(|t|^{-3/2} )\right)\sinh r dr\\ 
&=\frac{r_2-r_1}{\pi |t|}+O(|t|^{-3/2}).
\end{align*}
We integrate in polar coordinates to get \eqref{integrated-asymptotics}.

We use the same $r_1$ and $r_2$ for all $z_0\in C$ to get a compact set $K'\subset \mathbb H$ such that
\begin{equation*}\norm{f(z)}_{\infty, C}\ll_{ C} |t|^{1/2}\left(\int_{K'} |f(z)|^2d\mu (z)\right)^{1/2}.\end{equation*}
Finally by compactness we can cover $K'$ by a finite set of $\Gamma$-translates of a compact set $C'\subset \GmodH$.
\end{proof}

\section{Squares of eigenfunctions and Heegner points}
Let $f$ be a smooth compactly supported function on $\GmodH$. 
In order to use the results on equidistribution of Heegner points from
section \ref{Heegner} we need bounds on the coefficients in the
spectral expansion of $f\abs{u_j}^2$ and the similar coefficients coming
from
Eisenstein series. 

Very strong bounds (with precise exponential decay in $t_k$) are known on
$\inprod{\abs{u_j}^2}{u_k}$ (see \cite{Sarnak:1994a, Petridis:1995a,
  BernsteinReznikov:2004}) but unfortunately they do not seem uniform
enough for our purposes. In particular for a given $t_j$ the
bounds only hold for $t_k$ large enough (depending on $t_j$). In this
section we obtain much weaker bounds for similar expressions that
hold uniformly in both $t_j$ and $t_k$.   
\begin{theorem}\label{coefficient-bounds} Let $f$ be a smooth
  compactly supported function on $\GmodH$ with
  support $K$.
 For any $b>0$ we have the bound
 \begin{equation*}
    \inprod{f\abs{\phi_1}^2}{\phi_2}\ll_{b,f}
    \left(\frac{1+\abs{t_1}}{1+\abs{t_2}}\right)^b\norm{\phi_1}_{\infty,K}\norm{\phi_1}_2
    \norm{\phi_2}_2 , 
 \end{equation*}
where, for $j=1,2$, the functions  $\phi_j,$  equal $u_j$ with eigenvalue $1/4+t_j^2$ or
 $E^Y(z,1/2+it_j)$, where $Y$ is chosen such that the set $\{z\in\GmodH; y\leq
Y \}$ contains $K$ in its interior.
\end{theorem}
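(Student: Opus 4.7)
The plan is to exploit that on a neighborhood of the support $K$ of $f$ the function $\phi_2$ coincides with a genuine Laplace eigenfunction of eigenvalue $\lambda_2=1/4+t_2^2$: the hypothesis on $Y$ ensures $K$ lies in the interior of $\{y\leq Y\}$, where $\phi_2$ is either $u_j$ or the untruncated Eisenstein series $E(z,1/2+it_2)$, so $\Delta\phi_2=-\lambda_2\phi_2$ there. Since $f\abs{\phi_1}^2$ is compactly supported in $K$, Green's formula produces no boundary terms, and iterating gives, for any integer $N\geq 0$,
\begin{equation*}
\inprod{f\abs{\phi_1}^2}{\phi_2}=\frac{(-1)^N}{\lambda_2^N}\inprod{\Delta^N(f\abs{\phi_1}^2)}{\phi_2}.
\end{equation*}
Thus we gain $\lambda_2^{-N}\asymp (1+\abs{t_2})^{-2N}$, at the price of redistributing $2N$ derivatives among the three factors $f$, $\phi_1$, and $\bar{\phi_1}$.

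Next I would apply the Leibniz rule (using, for instance, the factorization $\Delta_0=L_1K_0$ together with the product formulas \eqref{leibniz}) to expand $\Delta^N(f\phi_1\bar{\phi_1})$ as a finite sum of terms of the form $(P_a f)(P_b\phi_1)(P_c\bar{\phi_1})$, where $P_a,P_b,P_c$ are compositions of raising and lowering operators whose total order is $2N$. For each summand, $P_a f$ is controlled by a $C^{2N}$-norm of $f$; by Remark \ref{tricky1} we have $\norm{P_b\phi_1}_{\infty,K}\ll (1+\abs{t_1})^{|P_b|}\norm{\phi_1}_{\infty,K'}$ for a slight enlargement $K'\supset K$; and by Remark \ref{tricky2}, $\norm{P_c\bar{\phi_1}}_{2,K}\ll (1+\abs{t_1})^{|P_c|}\norm{\phi_1}_2$, where the right-hand $L^2$-norm is the plain one for cusp forms and the truncated norm $\norm{E^Y}_2$ for Eisenstein series. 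Summing over the Leibniz expansion, one obtains
\begin{equation*}
\norm{\Delta^N(f\abs{\phi_1}^2)}_2\ll_{f,N}(1+\abs{t_1})^{2N}\norm{\phi_1}_{\infty,K'}\norm{\phi_1}_2.
\end{equation*}

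A final Cauchy--Schwarz, using that $\Delta^N(f\abs{\phi_1}^2)$ is supported in $K$, gives $\abs{\inprod{\Delta^N(f\abs{\phi_1}^2)}{\phi_2}}\leq \norm{\Delta^N(f\abs{\phi_1}^2)}_2\norm{\phi_2}_2$; combined with the $\lambda_2^{-N}$ gain and the choice $2N\geq b$, this yields the claimed bound, the minor thickening from $K$ to $K'$ being absorbed into the $f$-dependent constant (we may pre-enlarge the support of $f$ slightly before fixing the implicit constant). The main technical obstacle is the careful bookkeeping of the weight changes produced by the raising and lowering operators acting on $\phi_1$ and $\bar{\phi_1}$, and verifying that each step remains uniform in $t_1$; this is precisely what Lemmas \ref{sup-sup-bounds} and \ref{tricky} together with Remarks \ref{tricky1} and \ref{tricky2} have been set up to deliver. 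A smaller but essential point is the hypothesis on $Y$, which guarantees that the truncation built into $E^Y$ never interferes with the repeated integration by parts.
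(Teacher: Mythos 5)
Your proposal is correct and follows essentially the same route as the paper: repeated integration by parts against $(-\Delta)^N$ (using that $E^Y=E$ on a neighbourhood of $K$), a Leibniz expansion of $\Delta^N(f\phi_1\bar{\phi_1})$ into raising/lowering operators controlled via Remarks \ref{tricky1} and \ref{tricky2}, and a final Cauchy--Schwarz. The only cosmetic difference is at the end: the paper passes from even integer exponents to general $b$ by interpolation with the trivial ($N=0$) bound, which is exactly what your choice $2N\ge b$ implicitly needs in the regime $\abs{t_1}>\abs{t_2}$, where the ratio $(1+\abs{t_1})/(1+\abs{t_2})$ exceeds $1$ and a larger exponent alone would weaken the estimate.
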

\begin{proof}
By interpolation it suffices to prove the claim for $b=2n$ where  $n$
is a positive integer.
    We have
  \begin{align*}
    (1/4+t_2^2)^n\inprod{f\abs{\phi_1}^2}{\phi_2}&=\inprod{f\abs{\phi_1}^2}{(-\Delta)^n\phi_2}=\inprod{(-\Delta)^nf\abs{\phi_1}^2}{\phi_2}.
  \end{align*}
  In case $\phi_2=E^Y(\cdot, 1/2+it_2)$ we have used the fact  that on the support of $f$ we have $E^Y (\cdot , 1/2+it_2)=E(\cdot , 1/2+it_2)$.
We
see that the statement follows if we can prove that 
\begin{equation*}
  \norm{\Delta^nf\abs{\phi_1}^2}_2\ll_f
  (1/4+t_1^2)^n\norm{\phi_1}_{\infty, K}\norm{\phi_1}_{2}.
\end{equation*}
But since $\Delta^{n}$ consists of compositions of $n$ copies of
$L_1K_0$ this follows from the Leibniz' rule \eqref{leibniz},
remarks \ref{tricky1} and \ref{tricky2}, and the  compactness of the support of $f$.
\end{proof}

The following theorem makes explicit the rate of equidistribution of
Heegner points with the test function $f\abs{\psi}^2$ for $\psi$ an eigenfunction.

\begin{theorem} \label{propo} Let $\G=\pslz$ and let $f$ be a function on
  $\GmodH$ supported on the compact set $K$. Then 
  \begin{align*}
    \frac{1}{h(D)}
\sum_{z\in
  \Lambda_D}f(z)\abs{\psi_t}^2=&\frac{1}{\vol{\GmodH}}\int_{\GmodH}f\abs{\psi_t}^2d
\mu(z)\\
& +
    O_{f,\varepsilon}(\norm{\psi_t}_{\infty,K} D^{-1/12+\e} (1+\abs{t})^{1+\e}) ,
  \end{align*}
  where either $\psi_t=E(z,1/2+it)$ or $\psi_{t_j}=u_j(z)$.
\end{theorem}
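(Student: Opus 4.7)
The plan is to spectrally decompose $f\abs{\psi_t}^2$ on $L^2(\GmodH)$, apply the Heegner averaging operator term by term, and bound the resulting cuspidal and continuous contributions using Theorem \ref{coefficient-bounds} together with Lemma \ref{short-average-bound-on-weyl}.

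First I would write
\begin{equation*}
f\abs{\psi_t}^2=c_0+\sum_{k\ge 1}c_k u_k+\frac{1}{4\pi}\int_{-\infty}^{\infty}c(s)\,E(\cdot,1/2+is)\,ds,
\end{equation*}
with $c_0=(\vol{\GmodH})^{-1}\int f\abs{\psi_t}^2\,d\mu$, $c_k=\inprod{f\abs{\psi_t}^2}{u_k}$, and $c(s)=\inprod{f\abs{\psi_t}^2}{E(\cdot,1/2+is)}$. Averaging against $h(D)^{-1}\sum_{z\in\Lambda_D}$ produces $c_0$ as the main term, a cuspidal sum $S_c=\sum_k c_k W_c(D,t_k)/h(D)$, and a continuous integral $S_E=(4\pi)^{-1}\int c(s)W_E(D,s)/h(D)\,ds$.

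To estimate $S_c$ I split at a cutoff $M$ chosen slightly larger than $1+\abs{t}$, namely $M=(1+\abs{t})^{1+1/(b-1)}$ for a large integer parameter $b$. On the low range $\abs{t_k}\le M$, Cauchy--Schwarz combined with Parseval yields $\sum_{\abs{t_k}\le M}\abs{c_k}^2\le\norm{f\abs{\psi_t}^2}_2^2\ll_f\norm{\psi_t}_{\infty,K}^2\norm{\psi_t}_2^2$, while summing Lemma \ref{short-average-bound-on-weyl} over unit intervals gives $\sum_{\abs{t_k}\le M}\abs{W_c(D,t_k)}^2/h(D)^2\ll D^{-1/6+\varepsilon}M^{2+\varepsilon}$; together these produce a contribution of size $\norm{\psi_t}_{\infty,K}\norm{\psi_t}_2 D^{-1/12+\varepsilon}M^{1+\varepsilon}$. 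On the high range $\abs{t_k}>M$, Theorem \ref{coefficient-bounds} supplies $\abs{c_k}\ll_b((1+\abs{t})/(1+\abs{t_k}))^b\norm{\psi_t}_{\infty,K}\norm{\psi_t}_2$; applying Cauchy--Schwarz on each dyadic block $[T,2T]$ with $T\ge M$, counting spectral parameters by Weyl's law \eqref{weyllaw} and again invoking Lemma \ref{short-average-bound-on-weyl}, the $[T,2T]$-piece contributes at most $(1+\abs{t})^bT^{2-b+\varepsilon}\norm{\psi_t}_{\infty,K}\norm{\psi_t}_2 D^{-1/12+\varepsilon}$, and the geometric sum (for $b>2$) collapses to $(1+\abs{t})^b M^{2-b+\varepsilon}$ times the same factor. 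A direct computation with this $M$ shows that both the low and the high estimates become $\norm{\psi_t}_{\infty,K}\norm{\psi_t}_2 D^{-1/12+\varepsilon}(1+\abs{t})^{1+\varepsilon+(1+\varepsilon)/(b-1)}$, so taking $b$ large (depending on $\varepsilon$) yields the stated $(1+\abs{t})^{1+\varepsilon}$.

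The continuous contribution $S_E$ is handled identically using integral versions of Parseval and of Lemma \ref{short-average-bound-on-weyl}, together with Theorem \ref{coefficient-bounds} applied with $\phi_2=E^Y(\cdot,1/2+is)$; this replacement is legitimate because $f\abs{\psi_t}^2$ is supported in $K$, where $E$ and $E^Y$ agree. Finally $\norm{\psi_t}_2=1$ for $\psi_t=u_j$, while for $\psi_t=E(\cdot,1/2+it)$ the relevant norm is $\norm{E^Y(\cdot,1/2+it)}_2=O(\log\abs{t})$, which is absorbed into $\abs{t}^\varepsilon$. The main delicacy is precisely the balancing step just described: the naive cutoff $M=1+\abs{t}$ costs an extra power of $1+\abs{t}$ from the high range, so one must push $M$ marginally above $1+\abs{t}$ and send $b\to\infty$ to produce the sharp exponent $1+\varepsilon$.
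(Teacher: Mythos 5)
Your proposal is correct and follows essentially the same route as the paper's own proof: the same spectral decomposition, Cauchy--Schwarz against Lemma \ref{short-average-bound-on-weyl} on dyadic blocks, Bessel's inequality for the low range versus Theorem \ref{coefficient-bounds} for the tail, and the identical cutoff $V=(1+\abs{t})^{b/(b-1)}$ with $b\to\infty$. The only differences are organizational (you apply Cauchy--Schwarz over the whole low range at once rather than summing dyadic pieces), and your handling of the Eisenstein case and of $\norm{\psi_t}_2$ matches the paper's.
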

Before proving Theorem \ref{propo} we state and prove a bound on
the $L^2$-norm of  $E^Y(\cdot, 1/2+it)$. This is one of the several places in the proof of Theorem
\ref{propo} where  arithmeticity of the
group enters. Here it enters through the growth of the logarithmic derivative of the
scattering determinant.

\begin{lem}\label{eisenstein-bound-again} Let $\G=\pslz$. Then 
  \begin{equation*}
  \norm{E^Y(\cdot,1/2+it)}_{2}\ll_Y \sqrt{\log(2+\abs{t})}.
\end{equation*}
\end{lem}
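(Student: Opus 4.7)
The plan is to start from the Maass--Selberg relation for a single cusp, which after taking the limit to the critical line yields
\begin{equation*}
\norm{E^Y(\cdot, 1/2+it)}_2^2 = 2\log Y - \frac{\phi_0'}{\phi_0}(1/2+it) + \frac{\phi_0(1/2-it)Y^{2it} - \phi_0(1/2+it)Y^{-2it}}{2it}.
\end{equation*}
The functional equation $\phi_0(s)\phi_0(1-s)=1$ combined with $\phi_0(\bar s)=\overline{\phi_0(s)}$ forces $\abs{\phi_0(1/2+it)}=1$, so the last term is $O_Y(1/\abs{t})$ and $2\log Y=O_Y(1)$. Thus the entire problem reduces to proving the bound $-\phi_0'/\phi_0(1/2+it)=O(\log(2+\abs{t}))$.

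For $\G=\pslz$ one has the explicit expression $\phi_0(s)=\sqrt{\pi}\,\Gamma(s-1/2)\zeta(2s-1)/(\Gamma(s)\zeta(2s))$. Taking the logarithmic derivative at $s=1/2+it$ yields
\begin{equation*}
-\frac{\phi_0'}{\phi_0}(1/2+it) = \left(\frac{\Gamma'}{\Gamma}(1/2+it)-\frac{\Gamma'}{\Gamma}(it)\right) + 2\frac{\zeta'}{\zeta}(1+2it) - 2\frac{\zeta'}{\zeta}(2it).
\end{equation*}
By Stirling's asymptotic $\Gamma'/\Gamma(s)=\log s + O(1/\abs{s})$ the two leading logarithms cancel in the gamma difference, leaving $O(1/\abs{t})$. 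The term $\zeta'/\zeta(1+2it)$ is controlled by the classical Weyl-type estimate $\ll \log\abs{t}/\log\log\abs{t}$ already invoked in Lemma~\ref{tricky}.

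The main obstacle is the term $\zeta'/\zeta(2it)$, which lies off the region of absolute convergence and is not directly covered by Weyl's bound. I would dispatch it using the functional equation $\zeta(s)=\chi(s)\zeta(1-s)$ in the form
\begin{equation*}
\frac{\zeta'}{\zeta}(2it) = \frac{\chi'}{\chi}(2it) - \frac{\zeta'}{\zeta}(1-2it),
\qquad \frac{\chi'}{\chi}(s)=\log(2\pi)+\frac{\pi}{2}\cot(\pi s/2)-\frac{\Gamma'}{\Gamma}(1-s).
\end{equation*}
At $s=2it$ the cotangent becomes $-i\coth(\pi t)$, which is bounded for $\abs{t}$ away from $0$; Stirling gives $\Gamma'/\Gamma(1-2it)=O(\log\abs{t})$; and the reflected term $\zeta'/\zeta(1-2it)$ is again $O(\log\abs{t}/\log\log\abs{t})$ by Weyl. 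Assembling all contributions yields $-\phi_0'/\phi_0(1/2+it)=O(\log(2+\abs{t}))$, and taking square roots completes the proof.
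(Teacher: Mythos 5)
Your argument is correct and follows essentially the same route as the paper: the Maa{\ss}--Selberg relation reduces the lemma to showing $-\phi_0'/\phi_0(1/2+it)=O(\log(2+\abs{t}))$, which is then extracted from the explicit scattering matrix via Stirling and bounds on $\zeta'/\zeta$ on the $1$-line. The only cosmetic difference is that the paper's parallel computation (in Lemma~\ref{tricky}) writes $\phi_0(s)=\xi(2s-1)/\xi(2s)$ and uses $\xi(s)=\xi(1-s)$, which places both zeta logarithmic derivatives on $\Re(s)=1$ at once and avoids the separate treatment of $\zeta'/\zeta(2it)$ that you carry out with the asymmetric functional equation.
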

\begin{proof}
  This follows from the Maa{\ss}--Selberg relation as in the proof of  \cite[Lemma 6.1]{PetridisRisager:2016a}.
\end{proof}

\begin{proof}[Proof of Theorem \ref{propo}]
  We use the spectral expansion of $f\psi_t^2$ to see that
  \begin{align*}
    f\psi_t^2&-\frac{1}{\vol{\GmodH}}\int_{\GmodH}f\psi_t^2d\mu(z)\\ &=\sum_{t_k\neq
    i/2}\inprod{f\psi_t^2}{u_k}u_k+\frac{1}{4\pi}\int_\R\inprod{f\psi_t^2}{E(\cdot, 1/2+ir)}E(\cdot, 1/2+ir)dr.
  \end{align*} After we average over Heegner points, it suffices to 
  show that
  \begin{equation*}
\sum_{t_k\neq
    i/2}\inprod{f\psi_t^2}{u_k}\frac{W_c(D,t_k)}{h(D)}+\frac{1}{4\pi}\int_\R\inprod{f\psi_t^2}{E(\cdot, 1/2+ir)}\frac{W_E(D,r)}{h(D)}dr
\end{equation*}
is $O_{f,\varepsilon}(\norm{\psi_t}_{\infty,K} D^{-1/12+\e}
\abs{t}^{1+\e})$. We bound the discrete contribution, i.e. the sum, and
notice that the continuous contribution can  be bounded in the same
way. By Cauchy--Schwarz we have  
\begin{align*}
  \left(\sum_{L\leq t_k\leq
  2L}\inprod{f\psi_t^2}{u_k}\frac{W_c(D,t_k)}{h(D)}\right)^2&\leq \sum_{L\leq
                                                t_k\leq
                                                2L}{\abs{\inprod{f\psi_t^2}{u_k}}^2}\sum_{L\leq
t_k\leq 2L}\frac{\abs{W_c(D,t_k)}^2}{h(D)^2}\\
&\ll \sum_{L\leq {t_j}\leq 2L}\abs{\inprod{f\psi_t^2}{u_k}}^2 D^{-1/6+\varepsilon}(1+L)^{2+\varepsilon},
\end{align*}
where we have used Lemma
\ref{short-average-bound-on-weyl}.  We bound the remaining sum in two
different ways: either by using 
Theorem \ref{coefficient-bounds} or  Bessel's inequality. Using
Theorem \ref{coefficient-bounds} we find 
\begin{equation}\label{tail-useful}
  \frac{1}{h(D)}\sum_{L\leq t_k\leq
  2L}\inprod{f\psi_t^2}{u_k}W_c(D,t_k)\ll\frac{(1+\abs{t})^{b+\varepsilon}}{(1+L)^{b}}\norm{\psi_t}_{\infty,K} D^{-1/12+\varepsilon}(1+L)^{2+\varepsilon}.
\end{equation}
It follows from Lemma \ref{eisenstein-bound-again} for the case of Eisenstein series and elementary considerations that
\begin{equation*}\norm{f\psi_t^2}_2\ll_f\norm{\psi_t}_{\infty,K}(1+\abs{t})^\varepsilon.\end{equation*}
Using  this and Bessel's inequality  we get the estimate
\begin{equation}\label{bulk-useful}
  \frac{1}{h(D)}\sum_{L\leq t_k\leq
  2L}\inprod{f\psi_t^2}{u_k}W_c(D,t_k)\ll \norm{\psi_t}_{\infty,K}(1+\abs{t})^{\varepsilon}  D^{-1/12+\varepsilon}(1+L)^{1+\varepsilon}.
\end{equation}
We have similar bounds for the continuous contribution. Using
\eqref{bulk-useful} for the bulk and \eqref{tail-useful} for the tail 
we find easily that for $V$ bounded away from zero we have 
\begin{align*}
 f\psi_t^2&-\frac{1}{\vol{\GmodH}}\int_{\GmodH}f\psi_t^2d\mu(z)\\
&\ll_{f,b}\norm{\psi_t}_{\infty,K} D^{-1/12+\varepsilon}((1+\abs{t})^{\varepsilon}V^{1+\varepsilon}+\frac{(1+\abs{t})^b}{V^b}V^{2+\e}),
\end{align*} when $b>2+\varepsilon$. Choosing
$V=(1+\abs{t})^{b/(b-1)}$ and $b$ sufficiently large we arrive at the result.
\end{proof}

\section{Proof of the main theorems}
We now have the necessary tools to prove  theorems \ref{maintheorem} and  \ref{maintheorem-conditional}.  We start by constructing appropriate test functions for the Selberg
pre-trace formula.
 \subsection{The pre-trace formula} This
 follows our previous investigations. We
 refer to \cite{PetridisRisager:2016a} for
additional details.
Let $\delta>0$ be a small parameter, which will eventually  be chosen to
depend on $X$ and $D$. Let 
\begin{equation*}
  k_\delta(u)=\frac{1}{4\pi\sinh^2(\delta/2)}1_{[0,\cosh\delta-1)/2]}(u),
\end{equation*} where $1_{A}(u)$ denotes the  indicator function of any set
$A$. Let $Y>0$  be defined by $\cosh(Y)={X}/{2}.$ This implies
that $4u(z,w)+2\leq X$ if and only if $d(z,w)\leq Y$, where $d(z,w)$
denotes the hyperbolic distance between $z$ and $w$. Consider now
\begin{equation*}
  k^{\pm}(u)=\left(1_{[0,(\cosh(Y\pm \delta)-1)/2]}*k_{\delta}\right)(u),
\end{equation*} where $*$ denotes the hyperbolic
convolution \begin{equation*}k_1*k_2(u(z,w))=\int_{\H} k_1(u(z,v))k_2(u(v,w))d\mu(v).\end{equation*}
With this choice of kernels it follows from the triangle
inequality for the hyperbolic distance
that 
\begin{equation*}
  k^-(u)\leq 1_{[0,(X-2)/4]}(u)\leq k^+(u),
\end{equation*}
see \cite[Eq. (5.4)]{PetridisRisager:2016a}.
By summing over $\g\in\G$ we find that
\begin{equation}\label{useful-inequality}K^+(z,w,X)\leq N(z,w,X)\leq K^+(z,w,X),\end{equation}
where
\begin{equation*}
K^\pm(z,w,X):=\sum_{\g\in\G}k^{\pm}(\g z,w,X).
\end{equation*}
Subtracting $\pi X/\vol{\GmodH}$ and averaging over Heegner points of
discriminant $D$, we see that  bounds  for  the absolute values of the two expressions 
\begin{equation*}
  \frac{1}{h(D)}\sum_{z\in\Lambda_D}f(z)\left(K^{\pm}(z,z,X)-\frac{\pi X}{\vol{\GmodH}}\right)
\end{equation*}
imply the same bound for  the absolute value of 
\begin{equation*} \frac{1}{h(D)}\sum_{z\in\Lambda_D}f(z)\left(N(z,z,X)-\frac{\pi X}{\vol{\GmodH}}\right).\end{equation*}
The advantage of approximating $N(z,w,X)$ by the automorphic kernels $K^\pm(z,w,X)$ is that,
contrary to  $1_{[0,(X-2)/4)]}$, the kernels $k^{\pm}$  are admissible
in the Selberg pre-trace formula. Moreover, the corresponding
Selberg--Harish-Chandra transforms $h^{\pm}$ can be computed
explicitly in terms of special functions, as  the Selberg--Harish-Chandra
transform maps the hyperbolic convolution $k_1*k_2$ into the product of the transforms $h_1h_2$, see \cite[p.~323]{Chamizo:1996b}. 

Let $h_R$ denote the Selberg--Harish-Chandra transform of
$k_R=1_{[0,(\cosh R -1)/2]},$ with $R>0$. The function $h_R$ can be computed: \begin{equation*}
h_R(t)=\sqrt{2\pi \sinh R}\Re\left(e^{itR}\frac{\Gamma(it)}{\Gamma(3/2+it)}F\left(\frac{-1}{2},\frac{3}{2},1-it,\frac{1}{(1-e^{2R})}\right)\right),
\end{equation*}
where $F$ is the Gauss hypergeometric function, see \cite[p.~321]{Chamizo:1996b}.
It follows from the series representation of
$F$ that, for large enough $R$, say  $R>\log(2)/2$, we have
\begin{equation*}F\left(\frac{-1}{2},\frac{3}{2},1-it,\frac{1}{(1-e^{2R})}\right)=1+O(e^{-2R}\min(1,\abs{t}^{-1}).\end{equation*}
For $R$ small, say less than $1$, and $t$ real it is known that, see e.g. \cite[Lemma 2.4 (c), p.~320]{Chamizo:1996b},
 \begin{equation*}
 h_R(t)=2\pi R^2\frac{J_1(Rt)}{Rt}\sqrt{\frac{\sinh
     R}{R}}+O(R^2\min(R^2,\abs{t}^{-2}). 
\end{equation*}
Here $J_1$ is the Bessel function of order 1. It satisfies
\begin{equation*}
  2\frac{J_1(x)}{x}=1_{[0,1]}(x)+O(\min(\abs{x},\abs{x}^{-3/2})),
\end{equation*}
see \cite[B.28, B.35]{Iwaniec:2002a}.
It is also convenient to use the uniform bound $h_R(t)=O((R+1)e^{R/2})$, see \cite[Lemma 2.4 (d), p.~320]{Chamizo:1996b}.

The group $\G=\pslz$ has no small eigenvalues so we only need to estimate
$h^{\pm}(t)$ at $t=i/2$ and for $t\in\R$.  We have
\begin{equation*}
  h^{\pm}(t)=h_{Y\pm \delta}(t)\frac{h_\delta(t)}{4\pi\sinh^2(\delta/2)}.
\end{equation*}
A direct computation \cite[Lemma 2.4 (d), p.~320]{Chamizo:1996b} shows
that
\begin{equation}\label{zero-eigenvalue}
  h^{\pm}(i/2)=2\pi(\cosh(Y\pm\delta)-1)\frac{2\pi(\cosh\delta
    -1)}{4\pi\sinh^2(\delta/2)}=\pi X+O(1+\delta X).
\end{equation}
To estimate $h^\pm(t)$ for $t$ real 
we combine the bounds above and find
\begin{align}
  \nonumber h^{\pm}(t)&=O\left(\frac{\sqrt{X}}{t^{3/2}}\left(\min(1,(\delta\abs{t})^{-3/2})+\min(\delta^2,\abs{t}^{-2})\right)
  \right)\\ 
\label{bound1} &=O\left(\frac{\sqrt{X}}{t^{3/2}}\left(\min(1,(\delta\abs{t})^{-3/2})\right)\right),
\end{align}
see \cite[Eq. (5.5), (5.10)]{PetridisRisager:2016a}. Finally  
\begin{equation} \label{bound2-trivial}
  h^\pm(t)=O(\sqrt{X}\log X),
\end{equation} where the last bound is uniform for $t$ real.

\subsection{Applying the pre-trace formula}
 By the
pre-trace formula we have 
\begin{align*}K^{\pm}(z,z,X)=\sum_{t_j}h^\pm(t_j)\abs{u_j(z)}^2
+\frac{1}{4\pi}\int_\R  h^{\pm}(t)
\abs{E(z,1/2+it)}^2dt.\end{align*}
Using \eqref{zero-eigenvalue} we therefore see that \begin{align}\label{too-long-to-type}\nonumber\frac{1}{h(D)}&\sum_{z\in\Lambda_D}f(z)\left(K^{\pm}(z,z,X)-\frac{\pi
      X}{\vol{\GmodH}}\right)\\
\nonumber 
 =&\sum_{t_j\in\R}h^\pm(t_j)\frac{1}{h(D)}\sum_{z\in\Lambda_D}
   f(z)\abs{u_j(z)}^2\\
\nonumber&+\frac{1}{4\pi}\int_\R
  h^{\pm}(t)
\frac{1}{h(D)}\sum_{z\in\Lambda_D}f(z)\abs{E(z,1/2+it)}^2dt+O(\delta                                                                                     X)\\
 =&\sum_{t_j}h^\pm(t_j)\frac{1}{\vol{\GmodH}}\int_{\GmodH} f\abs{u_j}^2d\mu       +Q_c(X, \delta, D)\\ 
\nonumber&+\frac{1}{4\pi}\int_\R
  h^{\pm}(t)
\frac{1}{\vol{\GmodH}}\int_{\GmodH}f\abs{E(\cdot,1/2+it)}^2d\mu\, dt +Q_E(X, \delta, D)+O(\delta X),\\ \nonumber
\end{align}
where 
\begin{align}\nonumber
 Q_c(X, \delta, D)=&\sum_{t_j\in \R}h^\pm(t_j)\left(\frac{1}{h(D)}\sum_{z\in\Lambda_D} f(z)\abs{u_j(z)}^2-\frac{1}{\vol{\GmodH}}\int_{\GmodH} f\abs{u_j}^2d\mu\right),
\\\label{flocke}
 Q_E(X, \delta, D)=&\frac{1}{4\pi}\int_\R
  h^{\pm}(t)\left(\frac{1}{{h(D)}}\sum_{z\in\Lambda_D}f(z)\abs{E(z,1/2+it)}^2
                     \right.\\  &\quad \quad\quad\quad\quad\quad\quad\left.-\frac{1}{\vol{\GmodH}}\int_{\GmodH}f\abs{E(\cdot,1/2+it)}^2d\mu\right)
dt. \nonumber
\end{align}
The first and third terms in \eqref{too-long-to-type} are exactly the expressions that are treated in
\cite[Sec 6.]{PetridisRisager:2016a}, where we found -- using several deep results from
\cite{LuoSarnak:1995}, e.g. \eqref{oscillation} -- that the first term is bounded by 
$O(X^{7/12+\varepsilon})$ as long as we assume that $\delta$ tends to
zero at least as fast as $X^{-c}$ for some $c>0$.
  Compare \cite[Lemmata 6.2 and 6.3]{PetridisRisager:2016a}. The
  third term is $O(X^{1/2+\varepsilon})$ by \cite[Lemma 6.1]{PetridisRisager:2016a}.

The second term $Q_c(X, \delta, D)$  in \eqref{too-long-to-type} is where we
need bounds on the sup-norm. We first notice that we have the trivial bound 
\begin{align*}
\sum_{T\leq t_j\leq 2T}\frac{1}{h(D)}\sum_{z\in\Lambda_D}
  f(z)\abs{u_j(z)}^2&=\frac{1}{h(D)}\sum_{z\in\Lambda_D} f(z)\sum_{T\leq
  t_j\leq 2T}\abs{u_j(z)}^2\\
& \ll_f \Big\Vert{\sum_{T\leq
  t_j\leq 2T}\abs{u_j(z)}^2}\Big\Vert_\infty .
\end{align*}
Since $\int_{\GmodH} f(z)\abs{u_j(z)}^2d\mu(z))\ll_f 1$ we easily find
from Proposition \ref{sup-of-averages} and Weyl's law \eqref{weyllaw}   that 
\begin{equation*}
 \sum_{T\leq t_j\leq 2T}\left(\frac{1}{h(D)}\sum_{z\in\Lambda_D}
  f(z)\abs{u_j(z)}^2-\frac{1}{\vol{\GmodH}}\int_{\GmodH}
  f\abs{u_j}^2d\mu\right)
  =O_f(T^2). 
\end{equation*}
Interpolating this -- using $\min{ (B,C)}\leq B^aC^{1-a}$ for $0\leq a\leq
1$ -- with the bound we get from bounding each term using
Theorem \ref{propo} and applying Theorem \ref{sup-norm-average-intro}  we find that for any $0\leq a \leq 1$
\begin{align*}
  \sum_{T\leq t_j\leq 2T}\left(\frac{1}{h(D)}\sum_{z\in\Lambda_D}
  f(z)\abs{u_j(z)}^2\right.&-\left.\frac{1}{\vol{\GmodH}}\int_{\GmodH}
  f\abs{u_j}^2d\mu\right)\\&=O_f(T^{2+a(3/2-1/8)+\varepsilon}D^{-a/12+\varepsilon})). 
\end{align*}

Using \eqref{bound2-trivial} for a bounded set of $t_j$'s, \eqref{bound1} in the ranges $1\leq|t_j|<\delta^{-1}$ and $|t_j|\geq\delta^{-1} $, dyadic decomposition,  and the estimate above, we find
that the  quantity $Q_c(X, \delta, D)$ in \eqref{flocke} is
$O(X^{1/2}\delta^{-(1/2+a(3/2-1/8)+\varepsilon)}D^{-a/12+\varepsilon})$, for any $0\leq a\leq 1$ satisfying  $a(3/2-1/8)+\varepsilon<1$.

The strategy for bounding the fourth term of \eqref{too-long-to-type} is
basically the same. We use Proposition
\ref{sup-of-averages-Eisenstein}, Lemma
\ref{eisenstein-bound-again}, Theorem \ref{propo} and \eqref{Youngs-bound} to see that 
\begin{align*}
 \int_T^{2T}
 \left(\frac{1}{h(D)}{\sum_{z\in\Lambda_D}f(z)\abs{E(z,1/2+it)}^2}-\frac{1}{\vol{\GmodH}}{\int_{\GmodH}f\abs{E(\cdot,1/2+it)}^2d\mu}\right)
d
t
\end{align*}
is $O_f(\min(T^2,
T^{5/2-1/8+\varepsilon}D^{-1/12+\varepsilon})=O(T^{2+(1/2-1/8)b+\varepsilon}D^{-b/12+\varepsilon})$
for any $0\leq b\leq 1$. Doing dyadic decomposition
we find that the fourth quantity $Q_E(X, \delta, D)$ is $
O(X^{1/2}\delta^{-(1/2+b(1/2-1/8)+\varepsilon)}D^{-b/12+\varepsilon})$. Chosing
$a=b$ we see that this term is
smaller than the third. 

Putting everything together we find that 
\begin{align*}
  \frac{1}{h(D)}\sum_{z\in\Lambda_D}f(z)&\left(K^{\pm}(z,z,X)-\frac{\pi
      X}{\vol{\GmodH}}\right)=\\
&O(\delta X+X^{7/12+\varepsilon} +X^{1/2}\delta^{-(1/2+a(3/2-1/8)+\varepsilon)}D^{-a/12+\varepsilon}).
\end{align*}
Choosing $a$  minimal ($a=0$) and balancing error terms we recover
Selberg's  bound $O(X^{2/3+\varepsilon})$ with no saving due to the averaging in $D$. If we
choose $a$ as large as allowed, i.e. close to $1/(3/2-1/8)=8/11$, the error 
is 
\begin{equation*}
  O(\delta X+X^{7/12+\varepsilon} +X^{1/2}\delta^{-(3/2+\varepsilon)}D^{-a/12+\varepsilon}).
\end{equation*}
 To balance the first and third term we choose
 $\delta=X^{-1/5}D^{-4/165}$, which gives Theorem  \ref{maintheorem}.

 \begin{proof}[Proof of Theorem \ref{maintheorem-conditional}]
We assume \ref{Lindelof-conjecture}, \ref{sup-norm-conjecture}, and
  \ref{spectral-average-conjecture}. Using Remark
  \ref{bound-on-GRH}  and \cite[Remark 6.4]{PetridisRisager:2016a}
 and the same technique as above, we find that for any $0\leq a<1$
\begin{align*}
  \frac{1}{h(D)}\sum_{z\in\Lambda_D}f(z)&\left(K^{\pm}(z,z,X)-\frac{\pi
      X}{\vol{\GmodH}}\right)=\\
&O(\delta X+X^{1/2+\varepsilon} +X^{1/2}\delta^{-(1/2+a(3/2-1/2)+\varepsilon)}D^{-a/4+\varepsilon}).
\end{align*}
We choose $a$ close to 1 and $\delta=X^{-1/5}D^{-1/10}$ to get  the result.
 \end{proof}

\bibliographystyle{plain}
\def\cprime{$'$}

\end{document}